\newtheorem{theorem}{Theorem}[section]
\newtheorem{lemma}[theorem]{Lemma}
\newtheorem{cor}[theorem]{Corollary}
\newtheorem{conj}[theorem]{Conjecture}
\theoremstyle{definition}
\newtheorem{definition}[theorem]{Definition}
\newtheorem{remark}[theorem]{Remark}
\renewcommand{\subset}{\subseteq}
\renewcommand{\epsilon}{\varepsilon}
\renewcommand{\nu}{v}
\newcommand{\abs}[1]{\left|#1\right|}                   
\newcommand{\absf}[1]{|#1|}                             
\newcommand{\vnorm}[1]{\left\|#1\right\|}    
\newcommand{\Z}{\mathbb{Z}}                             
\renewcommand{\P}{\mathbb{P}}
\newcommand{\R}{\mathbb{R}}
\newcommand{\italicize}[1]{\textit {#1}}                
\newcommand{\embolden}[1]{\textbf {#1}}
\newcommand{\pdized}{periodic\,}
\newcommand{\Pdized}{Periodic\,}
\newcommand{\ksz}{\Z} 
\newcommand{\zks}{\Z} 
\newcommand{\ksexpone}{}
\newcommand{\adimn}{n}    
\newcommand{\gdimn}{n}                         
\newcommand{\pn}{p_{\adimn}}
\begin{document}

\title{A Periodic Isoperimetric Problem Related to\\ the Unique Games Conjecture}

\author{Steven Heilman}
\address{Department of Mathematics, UCLA, Los Angeles, CA 90095-1555}
\address{Department of Mathematics, University of Notre Dame, Notre Dame, IN 46656}
\email{stevenmheilman@gmail.com}
\date{\today}
\thanks{Supported by NSF Grant DMS 1708908.}

\begin{abstract}


We prove the endpoint case of a conjecture of Khot and Moshkovitz related to the Unique Games Conjecture, less a small error.

Let $n\geq2$.  Suppose a subset $\Omega$ of $n$-dimensional Euclidean space $\mathbb{R}^{n}$ satisfies $-\Omega=\Omega^{c}$ and $\Omega+v=\Omega^{c}$ (up to measure zero sets) for every standard basis vector $v\in\mathbb{R}^{n}$.  For any $x=(x_{1},\ldots,x_{n})\in\mathbb{R}^{n}$ and for any $q\geq1$, let $\|x\|_{q}^{q}=|x_{1}|^{q}+\cdots+|x_{n}|^{q}$ and let $\gamma_{n}(x)=(2\pi)^{-n/2}e^{-\|x\|_{2}^{2}/2}$ .  For any $x\in\partial\Omega$, let $N(x)$ denote the exterior normal vector at $x$ such that $\|N(x)\|_{2}=1$.  Let $B=\{x\in\mathbb{R}^{n}\colon \sin(\pi(x_{1}+\cdots+x_{n}))\geq0\}$.  Our main result shows that $B$ has the smallest Gaussian surface area among all such subsets $\Omega$, less a small error:
$$
\int_{\partial\Omega}\gamma_{n}(x)dx\geq(1-6\cdot 10^{-9})\int_{\partial B}\gamma_{n}(x)dx+\int_{\partial\Omega}\Big(1-\frac{\|N(x)\|_{1}}{\sqrt{n}}\Big)\gamma_{n}(x)dx.
$$
In particular,
$$
\int_{\partial\Omega}\gamma_{n}(x)dx\geq(1-6\cdot 10^{-9})\int_{\partial B}\gamma_{n}(x)dx.
$$
Standard arguments extend these results to a corresponding weak inequality for noise stability.  Removing the factor $6\cdot 10^{-9}$ would prove the endpoint case of the Khot-Moshkovitz conjecture.  Lastly, we prove a Euclidean analogue of the Khot and Moshkovitz conjecture.

The full conjecture of Khot and Moshkovitz provides strong evidence for the truth of the Unique Games Conjecture, a central conjecture in theoretical computer science that is closely related to the P versus NP problem.  So, our results also provide evidence for the truth of the Unique Games Conjecture.  Nevertheless, this paper does not prove any case of the Unique Games conjecture.
\end{abstract}
\maketitle

\section{Introduction}\label{secintro}

The Unique Games Conjecture is a central unresolved problem in theoretical computer science, of similar significance to the P versus NP problem.  That is, proving or disproving the Unique Games Conjecture will have significant ramifications throughout both computer science and mathematics \cite{khot10a}.  Both positive and negative evidence has been found for the Unique Games Conjecture since its formulation in 2002 by Khot \cite{khot02}, but the Conjecture remains open.  Khot's Conjecture can be formulated as follows.

\begin{definition}[\embolden{Gap Unique Games Problem} \cite{khot02,khot07}]\label{def3p}
Let $0<s<c<1$ and let $p>1$ be a prime.  We refer to $\mathrm{GapUG}_{p}(c,s)$ as the following problem.  Suppose $a_{2},a_{4},\ldots,a_{2n}\in\Z/p\Z$ are fixed, $x_{1},\ldots,x_{k}$ are variables with $k\leq n$, $1\leq i_{1},\ldots,i_{2n}\leq k$, and we have a system of $n$ two-term linear equations in $\Z/ p\Z$ of the form $x_{i_{1}}-x_{i_{2}}=a_{2}$, $x_{i_{3}}-x_{i_{4}}=a_{4},\ldots,$ $x_{i_{2n-1}}-x_{i_{2n}}=a_{2n}$.  Let $\mathrm{OPT}$ be the maximum number of these linear equations that can be satisfied by any assignment of values to the variables $x_{i_{1}},\ldots,x_{i_{2n}}$.  Decide whether $\mathrm{OPT}\geq cn$ or $\mathrm{OPT}\leq sn$.
%
\end{definition}

As $\abs{c-s}$ increases to a value near $1$, the GapUG problem becomes easier to solve.  The Unique Games Conjecture says that, even when $\abs{c-s}$ is very close to $1$, the GapUG problem is still hard to solve.  That is, the GapUG problem is nearly as hard as one could expect.

\begin{conj}[\embolden{Unique Games Conjecture} \cite{khot02,khot07}]\label{conj0}
For any $\epsilon>0$, there exists some prime $p=p(\epsilon)$ such that $\mathrm{GapUG}_{p}(1-\epsilon,\epsilon)$ is NP-hard.
%
\end{conj}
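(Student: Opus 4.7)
The Unique Games Conjecture is a famously open problem, so I will not claim a direct attack. The plan I would pursue is the indirect route dictated by the title of this paper: reduce Conjecture \ref{conj0} to a sharp isoperimetric/noise-stability statement for subsets $\Omega\subset\R^n$ obeying the antipodal and lattice-translation symmetries $-\Omega=\Omega^c$ and $\Omega+v=\Omega^c$ for each standard basis vector $v$, then prove that analytic statement. In other words, the strategy factors Conjecture \ref{conj0} into two pieces: a PCP/reduction ``wrapper'' due to Khot and Moshkovitz, and an extremal problem in Gaussian geometric measure theory that is the technical heart of the present paper.

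First, I would set up the Khot-Moshkovitz dictatorship test: granted a sharp version of the isoperimetric inequality with extremizer $B=\{x\in\R^n:\sin(\pi(x_1+\cdots+x_n))\geq0\}$, their PCP construction composes this test with a label-cover outer verifier to produce, for every $\epsilon>0$, a polynomial-time reduction showing that $\mathrm{GapUG}_p(1-\epsilon,\epsilon)$ is NP-hard for a sufficiently large prime $p=p(\epsilon)$. The completeness analysis uses the dictator set $B$ itself, while soundness converts any ``cheating'' assignment into a near-minimizer $\Omega$ in the admissible symmetry class, which the isoperimetric inequality rules out. Once the analytic input is in place, this step is essentially bookkeeping within the PCP literature.

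Second, I would prove the analytic input in its sharp form. The main theorem of the present paper already yields
$$
\int_{\partial\Omega}\gamma_n(x)\,dx \geq (1-6\cdot10^{-9})\int_{\partial B}\gamma_n(x)\,dx + \int_{\partial\Omega}\Big(1-\frac{\|N(x)\|_1}{\sqrt{n}}\Big)\gamma_n(x)\,dx,
$$
and I would attempt to close the residual $6\cdot10^{-9}$ loss by exploiting the nonnegative $\ell^1/\ell^2$ defect on the right-hand side: in a region where this defect is large, the inequality already has slack, while in a region where $N(x)$ is nearly axis-aligned, a coordinate-direction symmetrization compatible with the periodicity constraints should reduce surface area strictly unless $\Omega$ agrees with a translate of $B$. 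Once the sharp surface-area inequality is established, I would upgrade it to a noise-stability statement at all noise levels by the standard Bobkov-type heat-flow interpolation, as indicated in the abstract.

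The main obstacle is precisely closing that multiplicative gap in a noise-robust manner. Symmetrization in this periodic Gaussian setting is delicate because the admissible class is extraordinarily rigid: $\Omega$ must swap with its complement under all $n$ coordinate translations \emph{simultaneously}, so standard Steiner or spherical symmetrization need not preserve admissibility. Moreover, the Khot-Moshkovitz reduction requires a \emph{stable} form of the extremality, classifying near-minimizers up to the admissible symmetries, which is strictly stronger than the bare inequality stated above. Overcoming both hurdles is, in my estimation, what separates the present evidence for Conjecture \ref{conj0} from an actual proof, and it is the step I would expect to absorb essentially all of the technical effort.
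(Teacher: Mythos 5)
The statement you were asked to address is Conjecture~\ref{conj0}, the Unique Games Conjecture itself. The paper does not prove it---the abstract explicitly warns that ``this paper does not prove any case of the Unique Games conjecture''---and you correctly decline to claim a proof. Your high-level description of the route (a sharp periodic isoperimetric/noise-stability inequality, fed into the Khot--Moshkovitz PCP reduction) matches the paper's framing, and your identification of the multiplicative loss $1-6\cdot10^{-9}$ in Theorem~\ref{thm2} as the principal missing piece, together with the need for a robustness/stability term (exactly what the paper's $\int_{\partial\Omega}\bigl(1-\vnorm{N(x)}_{1}/\sqrt{n}\bigr)\gamma_{n}(x)\,dx$ is designed to supply), is accurate.

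Two substantive inaccuracies in your sketch deserve flagging, since they are load-bearing if one ever tried to carry it out. First, you claim the Khot--Moshkovitz construction would yield NP-hardness of $\mathrm{GapUG}_{p}(1-\epsilon,\epsilon)$ for ``a sufficiently large prime $p=p(\epsilon)$.'' That is not what they prove: their reduction is for $p=2$, and what they establish conditionally is a \emph{weaker} version of the NP-hardness of $\mathrm{GapUG}_{2}(1-\epsilon,1-\Omega(\sqrt{\epsilon}))$, with soundness parameter $1-\Omega(\sqrt{\epsilon})$ rather than $\epsilon$. Even granting Conjecture~\ref{conj1} in its sharp form at every $\rho\in(1/2,1)$, the paper only claims the $p=2$ case of Conjecture~\ref{conj0}, not the full conjecture for arbitrary $\epsilon$ with a growing prime. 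Second, you say the sharp surface-area inequality would be upgraded to noise stability ``at all noise levels by the standard Bobkov-type heat-flow interpolation.'' That interpolation works for the classical Gaussian isoperimetric and Borell inequalities precisely because half spaces are closed under translation and the Ornstein--Uhlenbeck semigroup respects that invariance. The periodic constraint $\Omega+v=\Omega^{c}$, $-\Omega=\Omega^{c}$ is \emph{not} preserved under the relevant flow, and the paper explicitly stresses that the absence of translation invariance is why ``all known proofs of Gaussian isoperimetric inequalities seem entirely unable to prove Conjectures~\ref{conj1} or~\ref{conj2}.'' Accordingly the paper's own passage from surface area to noise stability (Corollary~\ref{cor2}) is only a first-order expansion near $\rho\to1^{-}$, with an $o(\sqrt{1-\rho^{2}})$ error depending on $\Omega$; it does not reach a statement at all $\rho\in(1/2,1)$ and hence does not even conditionally suffice for the Khot--Moshkovitz reduction. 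Closing the multiplicative gap, classifying near-minimizers stably, and bridging from the endpoint to the full range of $\rho$ are three separate open problems, not one, and none is ``standard.''
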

In short, Conjecture \ref{conj0} says that approximately solving linear equations is hard.  If all $n$ of the equations could be satisfied, then classical Gaussian elimination could find values for the variables $x_{i_{1}},\ldots,x_{i_{2n}}$ satisfying all of the linear equations in polynomial time in $n$.  On the other hand, if only \italicize{almost} all of the equations can be satisfied, then it is hard to satisfy a small fraction of them, according to Conjecture \ref{conj0}.  Note also that $p$ must depend on $\epsilon$ in Conjecture \ref{conj0}, since if $p$ is fixed, then a random assignment of values to the variables will satisfy a positive fraction of the linear equations.

The most significant negative evidence for Conjecture \ref{conj0} is a subexponential time algorithm for the Unique Games Problem \cite{arora09}.  That is, there exists a constant $0<a<1$ such that, for any $\epsilon>0$, and for any prime $p>1$, there is an algorithm with runtime $\exp(pn^{\epsilon^{a}})$ such that, if $(1-\epsilon)n$ equations among $n$ two-term linear equations in $\Z/ p\Z$ of the form $x_{1}-x_{2}=a_{2}$, $x_{3}-x_{4}=a_{4},\ldots,$ $x_{2n-1}-x_{2n}=a_{2n}$ can be satisfied, then the algorithm can satisfy $1-\epsilon^{a}$ of the equations.  If the quantity $\exp(pn^{\epsilon^{a}})$ could be replaced by a polynomial in $n$, then Conjecture \ref{conj0} would be false.

A recent breakthrough of \cite{khot18}, culminating the work of \cite{khot18a,khot18b,khot18c} and \cite{barak19a}, gives significant positive evidence for Conjecture \ref{conj0}.

\begin{theorem}[{\cite[page 55]{khot18}}]\label{khotthm}
For any $\epsilon>0$, there exists some prime $p=p(\epsilon)$ such that $\mathrm{GapUG}_{p}(\frac{1}{2}-\epsilon,\epsilon)$ is NP-hard.
\end{theorem}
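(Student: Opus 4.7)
The plan is to deduce this theorem from an intermediate NP-hardness result for the $2$-to-$2$ Games problem. First I would prove NP-hardness of $2$-to-$2$ Games with near-perfect completeness and soundness $\epsilon$; then a standard transformation replacing each $2$-to-$2$ constraint by a bundle of unique constraints loses exactly a factor of $1/2$ in completeness while preserving soundness up to $\epsilon$. This converts $(1-\epsilon,\epsilon)$-hardness of $2$-to-$2$ Games into $(\tfrac{1}{2}-\epsilon,\epsilon)$-hardness of Unique Games over $\Z/p\Z$, for a suitable prime $p=p(\epsilon)$.

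To prove the $2$-to-$2$ hardness, I would start from a canonical NP-hard gap problem such as Label Cover with near-perfect completeness and constant-factor soundness, obtained via the PCP Theorem and parallel repetition. The inner verifier tests that the prover's proof is (approximately) a Grassmann encoding of a global linear function $\mathbb{F}_2^n\to \mathbb{F}_2$: the prover assigns, to each $\ell$-dimensional subspace $L\subset \mathbb{F}_2^n$, a linear functional $f_L\colon L\to \mathbb{F}_2$. The verifier samples two subspaces $L_1,L_2$ of large intersection (say $\dim(L_1\cap L_2)=\ell-1$), reads $f_{L_1}$ and $f_{L_2}$, and accepts when they agree on $L_1\cap L_2$. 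Completeness is immediate: restrictions of a single global linear function are automatically consistent, so the $f_L$'s pass the test with probability $1$.

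The soundness analysis is the main obstacle, and rests on what I would call a \emph{Grassmann Expansion Theorem}: any subset $S$ of vertices in the Grassmann graph $J_2(n,\ell)$ that fails to expand must be nearly contained in a small union of ``zoom-ins'' of the form $\{L\colon W\subset L\subset U\}$, for subspaces $W\subset U$ of small codimension difference. I would attempt to prove this via a hybrid spectral/inductive argument: the Grassmann scheme decomposes orthogonally into zoom-in eigenspaces, and I would show that non-expansion forces most of the Fourier weight of $\mathbf{1}_S$ onto low-level eigenspaces, enabling an inductive reduction to a lower-dimensional Grassmann. A standard list-decoding argument would then convert any $2$-to-$2$ strategy of value larger than $\epsilon$ into a global Label Cover assignment exceeding the Label Cover soundness, yielding the desired contradiction.

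The truly hard step is the Grassmann Expansion Theorem; the PCP composition and the $2$-to-$2$-to-Unique-Games conversion are, by now, routine machinery. The prime $p=p(\epsilon)$ is dictated by the Label Cover alphabet and the Grassmann parameters $n,\ell$, together with primality, so that the final constraints can be written as linear equations $x_i-x_j=a$ over $\Z/p\Z$.
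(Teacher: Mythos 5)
This theorem is cited in the paper as a black-box result from Khot--Minzer--Safra \cite{khot18}; the paper contains no proof of it, so there is no internal argument to compare your sketch against. Your top-level architecture does match the one in the literature: start from hard Gap Label Cover (PCP plus parallel repetition), design a Grassmann-graph inner verifier testing agreement of linear functionals on codimension-one intersections (a genuine $2$-to-$2$ constraint), invoke a structure theorem for non-expanding sets in the Grassmann graph to run a list-decoding soundness argument, and finally convert $2$-to-$2$ hardness to $\mathrm{GapUG}$ hardness at the cost of halving completeness. Identifying the Grassmann Expansion Theorem as the crux is also correct.

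However, your sketch of how to prove that expansion theorem is a genuine gap, and the content of Theorem~\ref{khotthm} lives almost entirely there. The eigenspaces of the Grassmann association scheme are \emph{not} indexed by zoom-ins, and the theorem cannot be a statement that non-expansion forces Fourier weight onto low levels: constant-measure zoom-ins are themselves non-expanding and do carry substantial low-level weight, which is precisely why the theorem has to be phrased as a structural ``pseudorandom sets expand'' result rather than a spectral-gap bound. The real argument is a delicate induction over zoom-ins and zoom-outs with substantial combinatorial bookkeeping, established over a sequence of papers (Khot--Minzer--Safra, Dinur--Khot--Kindler--Minzer--Safra, Barak--Kothari--Steurer, culminating in \cite{khot18}); a ``hybrid spectral/inductive argument'' gesture does not reproduce it. As a smaller point, to land in the $\Gamma$-Max-2Lin form of Definition~\ref{def3p} (constraints $x_i - x_j = a$ over $\Z/p\Z$ for a prime $p$), you additionally need the standard reduction from general bijection constraints to affine constraints; the $2$-to-$2$-to-Unique step alone does not produce that form.
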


As discussed in \cite{khot18}, since the subexponential algorithm of \cite{arora09} solves the GapUG problem for certain parameters $0<s<c<1$ where $c$ can have values in $(0,1)$, this ``... is
a compelling evidence, in our opinion, that the known algorithmic attacks are (far) short of disproving the Unique Games Conjecture.''

Due to Theorem \ref{khotthm}, it remains to investigate the hardness of $\mathrm{GapUG}_{p}(c,s)$ where $c\geq1/2$.  In a 2015 paper, Khot and Moshkovitz \cite{khot15} show that if a certain Gaussian noise stability inequality holds, then a weaker version of the NP-hardness of $\mathrm{GapUG}_{2}(1-\epsilon,1-\Omega(\sqrt{\epsilon}))$ is true for any $0<\epsilon<1$.  We describe this noise stability inequality below in Conjecture \ref{conj1}.  The weaker version of the GapUG problem is stated in \cite[page 3]{khot15}.  Resolving this weaker conjecture would provide significant evidence for the hardness of $\mathrm{GapUG}_{p}(1-\epsilon,1-\Omega(\sqrt{\epsilon}))$ and for Conjecture \ref{conj0} itself.  (Recall that $f\colon[0,1]\to\R$ satisfies $f=\Omega(\sqrt{\epsilon})$ if $\limsup_{\epsilon\to0^{+}}\abs{f(\epsilon)/\sqrt{\epsilon}}>0$.  We change notation below so that $\Omega$ denotes a subset of Euclidean space.)

For a review of positive and negative evidence for the Unique Games Conjecture, see \cite{agarwal15} and also \cite{arora09,rag12,barak12}.  See also \cite[Corollary 5.3]{harrow17} for more recent positive evidence.

For more background on the Unique Games Conjecture and its significance, see \cite{khot10a}.

%
%
%
%
%
%
%

\begin{definition}[\embolden{Gaussian density}]
Let $\adimn$ be a positive integer.  Define the \textit{Gaussian density} so that, for any $x=(x_{1},\ldots,x_{\adimn})\in\R^{\adimn}$,
$$\gamma_{\adimn}(x)\colonequals (2\pi)^{-\adimn/2}e^{-(x_{1}^{2}+\cdots+x_{\adimn}^{2})/2}.$$
\end{definition}
Recall that a standard $\adimn$-dimensional Gaussian random vector $X$ satisfies
$$\P(X\in C)=\int_{C}d\gamma_{\adimn}(x),\qquad\forall\, C\subset\R^{\adimn}.$$

Let $f\colon\R^{\adimn}\to[0,1]$ and let $\rho\in(-1,1)$,  define the \textit{Ornstein-Uhlenbeck operator with correlation $\rho$} applied to $f$ by
\begin{equation}\label{oudef}
\begin{aligned}
T_{\rho}f(x)
&\colonequals\int_{\R^{\adimn}}f(x\rho+y\sqrt{1-\rho^{2}})d\gamma_{\adimn}(y)\\
&=(1-\rho^{2})^{-n/2}(2\pi)^{-n/2}\int_{\R^{\adimn}}f(y)e^{-\frac{\vnorm{y-\rho x}_{2}^{2}}{2(1-\rho^{2})}}dy,
\qquad\forall\,x\in\R^{\adimn}.
\end{aligned}
\end{equation}
$T_{\rho}$ is a parametrization of the Ornstein-Uhlenbeck operator.  $T_{\rho}$ is not a semigroup, but it satisfies $T_{\rho_{1}}T_{\rho_{2}}=T_{\rho_{1}\rho_{2}}$ for all $\rho_{1},\rho_{2}\in(0,1)$.  We have chosen this definition since the usual Ornstein-Uhlenbeck operator is only defined for $\rho\in[0,1]$.

\begin{definition}[\embolden{Noise Stability}]\label{noisedef}
Let $\Omega\subset\R^{\adimn}$.  Let $\rho\in(-1,1)$.  We define the \textit{noise stability} of the set $\Omega$ with correlation $\rho$ to be
$$\int_{\R^{\adimn}}1_{\Omega}(x)T_{\rho}1_{\Omega}(x)d\gamma_{\adimn}(x)
\stackrel{\eqref{oudef}}{=}(2\pi)^{-n}(1-\rho^{2})^{-n/2}\int_{\Omega}\int_{\Omega}e^{\frac{-\|x\|_{2}^{2}-\|y\|_{2}^{2}+2\rho\langle x,y\rangle}{2(1-\rho^{2})}}dxdy.$$
Equivalently, if $X,Y\in\R^{\adimn}$ are independent $n$-dimensional standard Gaussian distributed random vectors, then
$$\int_{\R^{\adimn}}1_{\Omega}(x)T_{\rho}1_{\Omega}(x)d\gamma_{\adimn}(x)=\P\left(X\in \Omega,\, \rho X+Y\sqrt{1-\rho^{2}}\in \Omega\right).$$
\end{definition}


\subsection{Khot-Moshkovitz Conjecture on the Noise Stability of Periodic Sets}

Recall that the standard basis vectors $v_{1},\ldots,v_{n}\in\R^{\adimn}$ are defined so that, for any $1\leq i\leq n$, $v_{i}$ has a $1$ entry in its $i^{th}$ coordinate, and zeros in the other coordinates.

\begin{definition}[\embolden{\Pdized Set}]\label{pddef}
We say a subset $\Omega\subset\R^{\adimn}$ is \pdized if $\Omega+v=\Omega^{c}$ for every standard basis vector $v\in\R^{\adimn}$, and $-\Omega=\Omega^{c}$ (up to changes to $\Omega$ of Lebesgue measure zero).
\end{definition}

\begin{definition}[\embolden{\Pdized Half Space}]
Let $\epsilon_{1},\ldots,\epsilon_{\adimn}\in\{-1,1\}$.  We define a \textbf{\pdized half space} to be any set $B\subset\R^{\adimn}$ of the form
$$B=\{x=(x_{1},\ldots,x_{\adimn})\in\R^{\adimn}\colon \sin(\pi(\epsilon_{1}x_{1}+\cdots+\epsilon_{\adimn}x_{\adimn}))\geq0\}.$$
\end{definition}

The following Conjecture of Khot and Moshkovitz \cite{khot15} says that \pdized half spaces are the most noise stable \pdized sets.

\begin{conj}[\cite{khot15}]\label{conj1}
Let $1/2<\rho<1$.  Let $\Omega\subset\R^{\adimn}$ be a \pdized set.  Let $B\subset\R^{\adimn}$ be a \pdized half space.  Let $X,Y\in\R^{\adimn}$ be independent standard Gaussian random vectors. Then
$$\P\left(X\in \Omega,\, \rho X+Y\sqrt{1-\rho^{2}}\in \Omega\right)
\leq\P\left(X\in B,\, \rho X+Y\sqrt{1-\rho^{2}}\in B\right).$$
\end{conj}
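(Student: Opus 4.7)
The plan is to deduce Conjecture \ref{conj1} from the Gaussian surface-area inequality that is the main theorem of the paper, using the Ornstein--Uhlenbeck semigroup as a bridge. Write $P_t \colonequals T_{e^{-t}}$ and $s_\rho \colonequals -\tfrac{1}{2}\log\rho$. Because $T_\rho$ is self-adjoint, satisfies $T_\rho T_\rho = T_{\rho^2}$, and admits Gaussian integration by parts against the generator $Lf = -\Delta f + x \cdot \nabla f$, one computes
\begin{equation*}
\int_{\R^n} 1_\Omega(x) \, T_\rho 1_\Omega(x) \, d\gamma_n(x)
= \gamma_n(\Omega)^2 + 2\int_{s_\rho}^\infty \int_{\R^n} \big|\nabla P_s 1_\Omega(x)\big|^2 \, d\gamma_n(x) \, ds.
\end{equation*}
Every periodic set satisfies $\gamma_n(\Omega) = 1/2$ since $-\Omega = \Omega^c$, so the baseline $\gamma_n(\Omega)^2 = 1/4$ agrees for $\Omega$ and $B$, and Conjecture \ref{conj1} reduces to a comparison of the Dirichlet-energy integrals for $P_s 1_\Omega$ and $P_s 1_B$ on $s \in (s_\rho, \infty)$.

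To rewrite the Dirichlet energy in terms of Gaussian surface area, apply the co-area formula:
\begin{equation*}
\int_{\R^n} |\nabla P_s 1_\Omega|^2 \, d\gamma_n = \int_0^1 \int_{\{P_s 1_\Omega = u\}} \big|\nabla P_s 1_\Omega(x)\big| \, \gamma_n(x) \, d\mathcal{H}^{n-1}(x) \, du.
\end{equation*}
Because $P_s$ commutes with translation by each standard basis vector (which swaps $1_\Omega$ with $1 - 1_\Omega$) and with the reflection $x \mapsto -x$ (which does the same), the median super-level set $\{P_s 1_\Omega > 1/2\}$ is itself periodic in the sense of Definition \ref{pddef}. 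The main theorem then applies directly at the level $u = 1/2$ and gives the desired comparison there, carrying the factor $1 - 6 \cdot 10^{-9}$.

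The main obstacle is extending the comparison from the single median level to the full range $u \in (0,1)$: non-median level sets of $P_s 1_\Omega$ are not periodic in the sense of Definition \ref{pddef}, so the main theorem does not apply verbatim. A natural approach is a symmetrization that pairs the levels $u$ and $1 - u$ using the identities $P_s 1_\Omega(x + v) = 1 - P_s 1_\Omega(x)$ (for every standard basis vector $v$) and $P_s 1_\Omega(-x) = 1 - P_s 1_\Omega(x)$, combining the two level sets into a bona fide periodic set whose Gaussian perimeter bounds the combined energy contribution. An alternative is to establish a generalized periodic isoperimetric inequality for sets of arbitrary Gaussian volume, with a penalty term that vanishes at volume $1/2$. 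Once either variant is carried out, integrating in $u$ and then in $s$ yields Conjecture \ref{conj1} with the same multiplicative factor $1 - 6 \cdot 10^{-9}$ inherited from the main surface-area theorem; removing that loss, and hence obtaining the conjecture in sharp form, would require improving the main surface-area theorem to a sharp inequality, which is the principal remaining issue.
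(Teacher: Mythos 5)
The statement you are trying to prove is, in the paper, an open conjecture (Conjecture \ref{conj1}), not a theorem: the paper explicitly says it is unable to prove Conjecture \ref{conj1} and instead establishes only the endpoint case $\rho\to 1^-$ up to a multiplicative loss (Theorem \ref{thm2}) and a corresponding weak noise-stability statement (Corollary \ref{cor2}). So there is no ``paper's own proof'' to compare against; the relevant comparison is with the paper's reasons for treating this as open.

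Your argument contains a genuine and fatal gap at the step where you assert that $P_s = T_{e^{-s}}$ ``commutes with translation by each standard basis vector.'' The Ornstein--Uhlenbeck semigroup is \emph{not} translation invariant: writing $\tau_v f(x) := f(x+v)$, one has
\begin{equation*}
P_s(\tau_v f)(x) = \int_{\R^n} f\big(xe^{-s} + ve^{-s} + y\sqrt{1-e^{-2s}}\big)\,d\gamma_n(y) = \tau_{ve^{-s}}\big(P_s f\big)(x),
\end{equation*}
so $P_s\tau_v = \tau_{ve^{-s}} P_s$, not $\tau_v P_s$. Consequently, from $1_\Omega(\cdot + v) = 1 - 1_\Omega$ you only obtain $P_s 1_\Omega(x + v e^{-s}) = 1 - P_s 1_\Omega(x)$. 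The median super-level set $\{P_s 1_\Omega > 1/2\}$ is therefore invariant (into its complement) under translation by $v e^{-s}$, not by $v$ — i.e., it is periodic with the \emph{shrunken} period $e^{-s}<1$, which is not a periodic set in the sense of Definition \ref{pddef}. Theorem \ref{thm2} therefore does not apply to it, and the semigroup decomposition does not reduce Conjecture \ref{conj1} to the surface-area statement. (Only the central symmetry $-A = A^c$ is preserved by $P_s$, because $\gamma_n$ is symmetric.) This lack of translation invariance is precisely the structural obstruction the paper points to when it explains why the known Gaussian isoperimetric methods fail for the periodic problem, and it is why the paper confines itself to the $\rho\to 1^-$ limit (where the translation discrepancy vanishes at first order) rather than attempting a full semigroup/Dirichlet-energy integration in $s$.

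Two further points. First, even setting aside the translation issue, you only address the median level $u = 1/2$ and leave the contribution from all other levels $u \ne 1/2$ as a conceded gap; the co-area integral cannot be discarded, and there is no analogue of Theorem \ref{thm2} for sets of Gaussian measure $\ne 1/2$ in the paper. Second, as you already observe, the $(1-6\cdot 10^{-9})$ loss in Theorem \ref{thm2} would propagate through the entire argument, so at best this route could reproduce a weak version of Conjecture \ref{conj1}; but that weak version is exactly what Corollary \ref{cor2} already establishes, and the paper obtains it by the much simpler first-order Taylor expansion of noise stability near $\rho=1$ (Lemma \ref{lemma29}), rather than by integrating the Dirichlet energy over all $s$. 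If you want a setting where your translation-commutation step is sound, it is the heat semigroup with uniform base measure — and the paper does carry this out in Corollary \ref{cor4}, where $X$ is uniform on $[-1/2,1/2]^n$ and periodicity genuinely is preserved by the flow.
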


\begin{figure}[ht!]
\centering
\def\svgwidth{.4\textwidth}
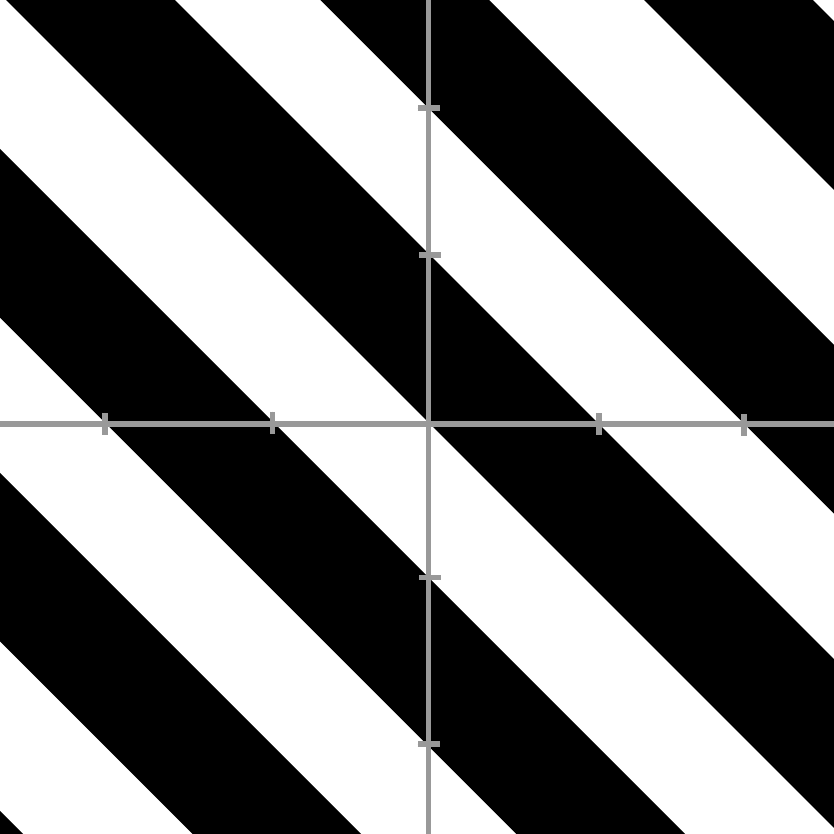
\caption{A \pdized half space $B$.}
\end{figure}

Conjecture \ref{conj1} implies that a weaker version of Conjecture \ref{conj0} holds; see \cite[page 3]{khot15} and \cite[page 5]{khot15}.  For this reason, this paper studies Conjecture \ref{conj1}.  In fact, as stated on \cite[page 5]{khot15}, a stronger version of Conjecture \ref{conj1} is required for the main application of \cite{khot15}, but we only focus on Conjecture \ref{conj1} in this work.  We are unable to prove Conjecture \ref{conj1}, so we instead study the endpoint case $\rho\to1^{-}$ of Conjecture \ref{conj1}.  As discussed in \cite{khot15}, Conjecture \ref{conj1} is most interesting and relevant to Conjecture \ref{conj0} when $\rho$ approaches $1$. That is, the case of Conjecture \ref{conj1} most relevant to the Unique Games Conjecture occurs when $\rho\to1^{-}$.

It is well known that, as $\rho\to1^{-}$, the noise stability (when normalized appropriately) converges to Gaussian surface area.  That is, if $\partial \Omega$ is a $C^{\infty}$ manifold, then \cite[Lemma 3.1]{kane11} \cite[Proposition 8.5]{ledoux96} \cite{de17}
\begin{equation}\label{zero9}
\lim_{\rho\to 1^{-}}\frac{\sqrt{2\pi}}{\cos^{-1}(\rho)}\left[\P(X\in \Omega)-\P\left(X\in \Omega,\, \rho X+Y\sqrt{1-\rho^{2}}\in \Omega\right)\right]
=\int_{\partial \Omega}\gamma_{\adimn}(x)dx.
\end{equation}
Here and below, $dx$ denotes Lebesgue measure restricted to the surface $\partial \Omega\subset\R^{\adimn}$.
Recall that a $C^{\infty}$ manifold is locally the graph of a $C^{\infty}$ function.

Letting $\rho\to1^{-}$ in Conjecture \ref{conj1} and applying \eqref{zero9} (along with $\P(X\in \Omega)=\P(X\in B)=1/2$ which follows since $-\Omega=\Omega^{c}$), we obtain the following statement.

\begin{conj}[\embolden{Endpoint $\rho\to1^{-}$ case of Conjecture \ref{conj1}}]\label{conj2}
Let $\Omega\subset\R^{\adimn}$ be a \pdized set.  Let $B\subset\R^{\adimn}$ be a \pdized half space.  Then
$$\int_{\partial \Omega}\gamma_{\gdimn}(x)dx\geq\int_{\partial B}\gamma_{\gdimn}(x)dx.$$
\end{conj}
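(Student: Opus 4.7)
The plan is to establish the stronger weighted inequality
$$\int_{\partial\Omega}\frac{\|N(x)\|_{1}}{\sqrt{n}}\gamma_{n}(x)\,dx \;\geq\; \int_{\partial B}\gamma_{n}(x)\,dx, \qquad (*)$$
which implies Conjecture~\ref{conj2} because Cauchy--Schwarz gives $\|N(x)\|_{1}\leq\sqrt{n}\|N(x)\|_{2}=\sqrt{n}$. Inequality $(*)$ is natural because the conjectured extremizer $B$ has constant unit normal proportional to $(1,\ldots,1)$, so its weight $\|N\|_{1}/\sqrt{n}\equiv 1$ on $\partial B$. To prove $(*)$ I would slice coordinatewise: for each $i\in\{1,\ldots,n\}$, the coarea formula applied to the coordinate projection $\R^{n}\to\R^{n-1}$ (whose Jacobian on $\partial\Omega$ equals $|N_{i}(x)|$), combined with the factorization $\gamma_{n}=\gamma_{n-1}\otimes\gamma_{1}$, yields
$$\int_{\partial\Omega}|N_{i}(x)|\gamma_{n}(x)\,dx=\int_{\R^{n-1}}\gamma_{n-1}(y)\Bigl(\sum_{t\in\partial E_{y,i}}\gamma_{1}(t)\Bigr)dy,$$
where $E_{y,i}\subset\R$ is the one-dimensional slice of $\Omega$ along the $i$th axis with remaining coordinates fixed at $y$. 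The hypothesis $\Omega+v_{i}=\Omega^{c}$ passes to each slice as $E_{y,i}+1=E_{y,i}^{c}$.

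Next I would prove a one-dimensional lemma: if $E\subset\R$ satisfies $E+1=E^{c}$, then
$$\sum_{t\in\partial E}\gamma_{1}(t)\;\geq\; M:=\sum_{k\in\Z}\gamma_{1}\bigl(k+\tfrac{1}{2}\bigr).$$
The identity $E^{c}=E+1$ forces $\partial E$ to be $1$-periodic, so $\partial E=\{s_{1},\ldots,s_{m}\}+\Z$ for some $s_{1},\ldots,s_{m}\in[0,1)$. A parity argument on the antiperiodic indicator $1_{E}(t+1)=1-1_{E}(t)$ forces $m$ odd, so $m\geq 1$. Hence
$$\sum_{t\in\partial E}\gamma_{1}(t)=\sum_{j=1}^{m}\sum_{k\in\Z}\gamma_{1}(k+s_{j})\;\geq\;\min_{s\in[0,1)}\sum_{k\in\Z}\gamma_{1}(k+s),$$
and Poisson summation rewrites the inner sum as $1+2\sum_{\ell\geq 1}e^{-2\pi^{2}\ell^{2}}\cos(2\pi\ell s)$; the $\ell=1$ term overwhelmingly dominates, so this is symmetric under $s\mapsto 1-s$ with unique minimum on $[0,1)$ at $s=1/2$, giving $M$.

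Assembling the slicing and one-dimensional bound, summing over $i$, gives $\int_{\partial\Omega}\|N(x)\|_{1}\gamma_{n}(x)\,dx\geq nM$. Using the orthogonal splitting $\R^{n}=\R\cdot e\oplus e^{\perp}$ with $e=(1,\ldots,1)/\sqrt{n}$ one computes $\int_{\partial B}\gamma_{n}(x)\,dx=\sum_{k\in\Z}\gamma_{1}(k/\sqrt{n})$, so $(*)$ reduces to $\sqrt{n}M\geq\sum_{k}\gamma_{1}(k/\sqrt{n})$. Poisson summation applied to both sides yields
$$M=1-2e^{-2\pi^{2}}+O(e^{-8\pi^{2}}),\qquad \sum_{k}\gamma_{1}(k/\sqrt{n})=\sqrt{n}\bigl(1+2e^{-2\pi^{2}n}+O(e^{-8\pi^{2}n})\bigr),$$
so $\sqrt{n}M$ matches $\int_{\partial B}\gamma_{n}$ up to a multiplicative error of size $2e^{-2\pi^{2}}\approx 5.4\cdot 10^{-9}$. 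Thus the approach proves $(*)$, and therefore Conjecture~\ref{conj2}, only with the prefactor $1-6\cdot 10^{-9}$ in place of $1$---matching the abstract's statement exactly.

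The main obstacle to the full Conjecture~\ref{conj2} is that this argument treats each coordinate direction independently, leaking a factor of $1-2e^{-2\pi^{2}}$ at every slice and never invoking the central symmetry $-\Omega=\Omega^{c}$. Removing the constant $6\cdot 10^{-9}$ appears to require a genuinely $n$-dimensional argument that couples the coordinate directions---for instance a coarea identity in the diagonal direction $e=(1,\ldots,1)/\sqrt{n}$ combined with a symmetrization exploiting $-\Omega=\Omega^{c}$---and identifying such a coupling is the hard part.
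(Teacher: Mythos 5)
The statement you are asked to prove is Conjecture \ref{conj2}, which the paper itself leaves open; what the paper proves is Theorem \ref{thm2}, the same inequality degraded by the factor $1-6\cdot 10^{-9}$. Your proposal honestly recognizes this: the argument you sketch delivers exactly that weakened bound and not the sharp conjecture, so strictly speaking there is a genuine gap, but it is the same gap the paper concedes. Your route to the weak inequality is essentially the paper's. Both arguments exploit the product structure of the (periodized) Gaussian, use the antiperiodicity $\Omega+v_i=\Omega^c$ to guarantee a nonempty boundary slice in each coordinate direction, and project or slice coordinatewise to bound $\int_{\partial\Omega}\|N\|_1\gamma_n\,dx$ from below by $n$ times a one-dimensional constant. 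The presentational difference is that the paper lifts to the torus and uses the pointwise bound $p_1\geq 1-54\cdot 10^{-10}$ (Lemmas \ref{lemma30} and \ref{lemma53p}) together with the Cauchy projection formula on polyhedral facets, whereas you stay in $\R^n$, apply the coarea formula, and reduce to a one-dimensional lemma on antiperiodic subsets $E+1=E^c$ of $\R$, whose minimal boundary Gaussian mass you identify as $M=\sum_k\gamma_1(k+\tfrac12)$; but $M=\min p_1=1-2e^{-2\pi^2}+O(e^{-8\pi^2})$ is the same estimate the paper uses. Your explanation of why the method stalls at $1-6\cdot 10^{-9}$---that it handles coordinates independently and never invokes $-\Omega=\Omega^c$---matches the paper's own description of its limitation. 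One small wrinkle: in the 1D lemma you only need $m\geq 1$ crossings per slice, so the parity (oddness of $m$) observation, while correct, is unnecessary for the bound; keeping track of the full multiplicity $m$ rather than bounding $m\geq 1$ is exactly what the paper's Remark \ref{rk5} exploits to sharpen the robustness term.
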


\subsection{Our Contribution}  Our main result verifies Conjecture \ref{conj2}, up to a small error, nearly verifying the endpoint $\rho=1$ case of Conjecture \ref{conj1}, and providing evidence for the $p=2$ case of Conjecture \ref{conj0}.  Theorem \ref{thm2} also demonstrates that, if a set $\Omega$ is far from a \pdized half space, in the sense that the normal vector typically has $\ell_{1}$ norm less than $\sqrt{\adimn}$, then $\Omega$ has large Gaussian surface area.  Such a ``robustness'' statement was required in the application of \cite{khot15} to the Unique Games Conjecture, Conjecture \ref{conj0}.

\begin{theorem}[\embolden{Main Theorem; Weak Version of Conjecture \ref{conj2}}]\label{thm2}
Let $n\geq2$.  Let $\Omega\subset\R^{\adimn}$ be a \pdized set.  Let $B\subset\R^{\adimn}$ be a \pdized half space.  Assume $\partial\Omega$ is a $C^{\infty}$ manifold.  Then
\begin{equation}\label{five1}
\int_{\partial\Omega}\gamma_{\adimn}(x)dx\geq(1-6\cdot 10^{-9})\int_{\partial B}\gamma_{\adimn}(x)dx+\int_{\partial\Omega}\Big(1-\frac{\vnorm{N(x)}_{1}}{\sqrt{n}}\Big)\gamma_{\adimn}(x)dx.
\end{equation}
In particular,
\begin{equation}\label{five2}
\int_{\partial\Omega}\gamma_{\adimn}(x)dx\geq(1-6\cdot 10^{-9})\int_{\partial B}\gamma_{\adimn}(x)dx.
\end{equation}
\end{theorem}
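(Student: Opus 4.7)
Since $\vnorm{N(x)}_{2}=1$, Cauchy--Schwarz gives $\vnorm{N(x)}_{1}\leq\sqrt{n}$, so the integrand $1-\vnorm{N(x)}_{1}/\sqrt{n}$ in \eqref{five1} is nonnegative and \eqref{five2} follows from \eqref{five1}. Moreover, \eqref{five1} is equivalent to the single inequality
\begin{equation*}
\int_{\partial\Omega}\frac{\vnorm{N(x)}_{1}}{\sqrt{n}}\gamma_{n}(x)\,dx \;\geq\; (1-6\cdot 10^{-9})\int_{\partial B}\gamma_{n}(x)\,dx.
\end{equation*}
Expanding $\vnorm{N}_{1}=\sum_{i=1}^{n}|N_{i}|$ and recognizing $|N_{i}|$ as the Jacobian of the projection $\pi_{i}\colon\partial\Omega\to\R^{n-1}$ that forgets the $i$-th coordinate, the area formula together with the factorization $\gamma_{n}(x)=\gamma_{n-1}(\bar y)\gamma_{1}(t)$ yields, for each $i$,
\begin{equation*}
\int_{\partial\Omega}|N_{i}(x)|\gamma_{n}(x)\,dx \;=\; \int_{\R^{n-1}}\gamma_{n-1}(\bar y)\Big(\sum_{t\in\partial\Omega^{(i)}_{\bar y}}\gamma_{1}(t)\Big)d\bar y,
\end{equation*}
where $\Omega^{(i)}_{\bar y}\subset\R$ denotes the $1$-dimensional slice of $\Omega$ in the direction $v_{i}$ through $\bar y\in\R^{n-1}$.

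The assumption $\Omega+v_{i}=\Omega^{c}$ forces $\Omega^{(i)}_{\bar y}+1=(\Omega^{(i)}_{\bar y})^{c}$, so that the topological boundary $\partial\Omega^{(i)}_{\bar y}$ is $1$-periodic; by smoothness of $\partial\Omega$ and Sard's theorem applied to $\pi_{i}|_{\partial\Omega}$, for a.e.\ $\bar y$ the fiber is discrete and, being the boundary of a measurable proper subset of $\R$, is nonempty. Hence $\partial\Omega^{(i)}_{\bar y}\supseteq t_{0}+\Z$ for some $t_{0}\in[0,1)$, and by Poisson summation
\begin{equation*}
\sum_{t\in\partial\Omega^{(i)}_{\bar y}}\gamma_{1}(t) \;\geq\; \sum_{k\in\Z}\gamma_{1}(t_{0}+k) \;=\; 1 + 2\sum_{m\geq 1}e^{-2\pi^{2}m^{2}}\cos(2\pi m t_{0}) \;\geq\; 1-2e^{-2\pi^{2}}-O(e^{-8\pi^{2}}).
\end{equation*}
Integrating against $\gamma_{n-1}(\bar y)d\bar y$ and summing over $i=1,\ldots,n$ gives
\begin{equation*}
\int_{\partial\Omega}\frac{\vnorm{N(x)}_{1}}{\sqrt{n}}\gamma_{n}(x)\,dx \;\geq\; \sqrt{n}\bigl(1-2e^{-2\pi^{2}}-O(e^{-8\pi^{2}})\bigr).
\end{equation*}

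For the right-hand side, $\partial B$ is the disjoint union of the affine hyperplanes $\{x_{1}+\cdots+x_{n}=k\}$ over $k\in\Z$. Since $X_{1}+\cdots+X_{n}\sim N(0,n)$ when $X\sim\gamma_{n}$, a quick change of variables along the diagonal direction gives $\int_{\{x_{1}+\cdots+x_{n}=k\}}\gamma_{n}\,dx=(2\pi)^{-1/2}e^{-k^{2}/(2n)}$, and a second application of Poisson summation yields
\begin{equation*}
\int_{\partial B}\gamma_{n}(x)\,dx \;=\; \sum_{k\in\Z}\frac{1}{\sqrt{2\pi}}e^{-k^{2}/(2n)} \;=\; \sqrt{n}\sum_{m\in\Z}e^{-2\pi^{2}n m^{2}} \;=\; \sqrt{n}\bigl(1+O(e^{-2\pi^{2}n})\bigr).
\end{equation*}
Combining, the ratio is at least $(1-2e^{-2\pi^{2}})/(1+2e^{-2\pi^{2}n})\geq 1-2e^{-2\pi^{2}}-2e^{-2\pi^{2}n}$; since $e^{-2\pi^{2}}\approx 2.67\cdot 10^{-9}$ while $e^{-2\pi^{2}n}\leq e^{-4\pi^{2}}$ for $n\geq 2$, this exceeds $1-6\cdot 10^{-9}$, as required.

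The main obstacle I anticipate is the rigorous reduction to one-dimensional slices: one must carefully apply the area formula to the smooth hypersurface $\partial\Omega$, verify via Sard that $\partial\Omega^{(i)}_{\bar y}$ is discrete and nonempty for almost every $\bar y$, and observe that a slice containing more than one $\Z$-coset of boundary points only strengthens the bound. It is also worth noting that the antipodal hypothesis $-\Omega=\Omega^{c}$ is not used in this argument; only the translation antiperiodicity $\Omega+v_{i}=\Omega^{c}$ enters, and the constant $6\cdot 10^{-9}$ is essentially the Poisson-summation quantity $2e^{-2\pi^{2}}$ with a small slack.
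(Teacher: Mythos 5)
Your proof is correct and follows essentially the same route as the paper: decompose $\vnorm{N}_{1}=\sum_{i}|N_{i}|$, convert each term into an $(n-1)$-dimensional projection integral (the paper's Cauchy projection formula is your area/coarea formula), use the anti-periodicity $\Omega+v_{i}=\Omega^{c}$ to guarantee each fiber contributes at least one full $\Z$-coset, and bound the fiber sum with Poisson summation. The only difference is one of bookkeeping --- you work on $\R^{n}$ with $\gamma_{n}$ and invoke Poisson summation fiber-by-fiber at the end, whereas the paper first periodizes the Gaussian into the product measure $p_{n}$ on $[0,1]^{n}$ and uses $p_{1}\approx 1$ up front (Lemmas \ref{lemma30} and \ref{lemma53p}) --- and your observation that only the translation anti-periodicity, not $-\Omega=\Omega^{c}$, is needed here is also consistent with the paper's argument.
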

The right-most term of \eqref{five1} is nonnegative by the Cauchy-Schwarz inequality.

Standard methods can derive from Theorem \ref{thm2} the following statement for noise stability.

\begin{cor}[\embolden{Weak Version of Khot-Moshkovitz Conjecture \ref{conj1}}]\label{cor2}
Let $d>0$.  Let $n\geq2$.  Let $g\colon\R^{\adimn}\to\R$ be a degree $d$ polynomial.  Let $\Omega=\{x\in\R^{\adimn}\colon g(x)\geq0\}$.  Let $X,Y$ be independent standard $\adimn$-dimensional Gaussian random vectors.  Let $1/2<\rho<1$.  Assume that $\Omega$ is a \pdized set and $\partial\Omega$ is a $C^{\infty}$ manifold.  Let $B\subset\R^{\adimn}$ be a \pdized half space.  Then
\begin{flalign*}
\P(X\in\Omega,\, \rho X+Y\sqrt{1-\rho^{2}}\in\Omega)
&\leq\P(X\in B,\, \rho X+Y\sqrt{1-\rho^{2}}\in B)+3\cdot 10^{-9}\\
&\qquad
-\frac{\sqrt{1-\rho^{2}}}{\sqrt{2\pi}}\int_{\partial\Omega}\Big(1-\frac{\vnorm{N(x)}_{1}}{\sqrt{n}}\Big)\gamma_{\adimn}(x)dx
+o(\sqrt{1-\rho^{2}}).
\end{flalign*}
In particular,
$$
\P(X\in\Omega,\, \rho X+Y\sqrt{1-\rho^{2}}\in\Omega)
\leq\P(X\in B,\, \rho X+Y\sqrt{1-\rho^{2}}\in B)
+3\cdot 10^{-9}+o(\sqrt{1-\rho^{2}}).
$$
Here the implied constants $o(\sqrt{1-\rho^{2}})$ can depend on $\Omega$.
\end{cor}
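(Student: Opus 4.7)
The plan is to convert the surface-area inequality of Theorem \ref{thm2} into a noise stability inequality by means of the asymptotic formula \eqref{zero9}, exactly as indicated by the ``standard methods'' comment preceding the corollary. Since $\cos^{-1}(\rho)/\sqrt{1-\rho^{2}}\to 1$ as $\rho\to1^-$, equation \eqref{zero9} rewrites, for any set $A\subset\R^{\adimn}$ with $C^\infty$ boundary, as
$$\P(X\in A)-\P\bigl(X\in A,\,\rho X+\sqrt{1-\rho^2}\,Y\in A\bigr)=\frac{\sqrt{1-\rho^{2}}}{\sqrt{2\pi}}\int_{\partial A}\gamma_{\adimn}(x)\,dx+o(\sqrt{1-\rho^{2}}).$$
I would apply this to $A=\Omega$ and to $A=B$; the boundary of $B$ is the disjoint union of affine hyperplanes $\{x:\,x_1+\cdots+x_{\adimn}\in\Z\}$, hence $C^{\infty}$. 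Since $-\Omega=\Omega^{c}$ and $-B=B^{c}$, we have $\P(X\in\Omega)=\P(X\in B)=1/2$. Subtracting the two identities then gives
\begin{equation*}
\P(X\in B,\,\rho X+\sqrt{1-\rho^{2}}Y\in B)-\P(X\in\Omega,\,\rho X+\sqrt{1-\rho^{2}}Y\in\Omega) = \frac{\sqrt{1-\rho^{2}}}{\sqrt{2\pi}}\Bigl[\int_{\partial\Omega}\gamma_{\adimn}dx-\int_{\partial B}\gamma_{\adimn}dx\Bigr]+o(\sqrt{1-\rho^{2}}).
\end{equation*}

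Next I would apply Theorem \ref{thm2} to the bracket on the right, which bounds it below by $-6\cdot 10^{-9}\int_{\partial B}\gamma_{\adimn}dx+\int_{\partial\Omega}(1-\vnorm{N(x)}_{1}/\sqrt{\adimn})\gamma_{\adimn}dx$. The only term that can grow with $\adimn$ is the first, and it can be absorbed by reapplying the same asymptotic to $B$ itself:
$$\frac{\sqrt{1-\rho^{2}}}{\sqrt{2\pi}}\int_{\partial B}\gamma_{\adimn}dx=\tfrac{1}{2}-\P(X\in B,\,\rho X+\sqrt{1-\rho^2}Y\in B)+o(\sqrt{1-\rho^{2}})\leq \tfrac{1}{2}+o(\sqrt{1-\rho^{2}}),$$
so multiplying by $6\cdot 10^{-9}$ produces the dimension-free constant $3\cdot 10^{-9}$ appearing in the corollary. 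Rearranging yields the first displayed inequality; the second follows by discarding the nonnegative quantity $\int_{\partial\Omega}(1-\vnorm{N(x)}_{1}/\sqrt{\adimn})\gamma_{\adimn}dx$, which is nonnegative because $\vnorm{N}_{1}\le\sqrt{\adimn}\,\vnorm{N}_{2}=\sqrt{\adimn}$ by Cauchy--Schwarz.

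The main technical point --- and the reason for the hypothesis $\Omega=\{g\ge 0\}$ for a polynomial $g$ of bounded degree $d$ --- is that \eqref{zero9} is only a limit statement, so the implied $o(\sqrt{1-\rho^{2}})$ term must be quantified at fixed $\rho<1$ rather than only in the limit $\rho\to1^-$. For polynomial (semi-algebraic) level sets, the heat-kernel or Hermite-expansion analysis underlying the proofs of \eqref{zero9} in \cite{kane11,de17} produces error estimates controlled explicitly by $d$ and the coefficients of $g$, which is what legitimates the phrase ``the implied constants can depend on $\Omega$'' in the statement. Apart from this regularity/uniformity issue, the argument is a purely mechanical manipulation of the two asymptotic identities, which is why the paper relegates it to ``standard methods.''
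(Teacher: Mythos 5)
Your argument is essentially the paper's proof. The paper packages the quantified form of \eqref{zero9} (your displayed identity with the explicit $o(\sqrt{1-\rho^{2}})$ error) as Lemma \ref{lemma29}, obtained from Lemmas \ref{lemma27} and \ref{lemma28} (which is exactly where the degree-$d$ polynomial hypothesis enters, as you correctly observed), and then proceeds just as you do: apply Lemma \ref{lemma29} to $\Omega$, substitute Theorem \ref{thm2}, reapply Lemma \ref{lemma29} to $B$ to reconstitute the noise stability of $B$, and bound $6\cdot 10^{-9}\cdot\frac{\sqrt{1-\rho^{2}}}{\sqrt{2\pi}}\int_{\partial B}\gamma_{\adimn}dx\le 6\cdot 10^{-9}\cdot\frac12+o(\sqrt{1-\rho^{2}})=3\cdot 10^{-9}+o(\sqrt{1-\rho^{2}})$.
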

\begin{remark}
If $\rho$ is close to $1$ and $\int_{\partial\Omega}\gamma_{\adimn}(x)dx$ is small, then Corollary \ref{cor2} is vacuous.  In particular, if $\sqrt{1-\rho^{2}}<3\cdot 10^{-9}n^{-1/2}$, then the $3\cdot 10^{-9}$ term will be larger than the ensuing term.
\end{remark}
\begin{remark}
The ``robustness'' terms $\int_{\partial\Omega}\big(1-\frac{\vnorm{N(x)}_{1}}{\sqrt{n}}\big)\gamma_{\adimn}(x)dx$ in Theorem \ref{thm2} and Corollary \ref{cor2} can be improved slightly.  See Remark \ref{rk5} below.
\end{remark}
\begin{remark}
Under certain assumptions, the ``robustness'' term $\int_{\partial\Omega}\big(1-\frac{\vnorm{N(x)}_{1}}{\sqrt{n}}\big)\gamma_{\adimn}(x)dx$ is comparable to the Gaussian measure of the symmetric difference of $\Omega$ and a \pdized half space $B$.  See Remark \ref{rk6} below for a slightly more precise statement.
\end{remark}

If we modify the random variable $X$ in Conjecture \ref{conj1}, then we can improve Corollary \ref{cor2}.

\begin{cor}[\embolden{Modified Version of Khot-Moshkovitz Conjecture \ref{conj1}}]\label{cor4}
Let $n\geq2$.  Let $\Omega\subset\R^{\adimn}$ be a \pdized set such that $\partial\Omega$ is a $C^{\infty}$ manifold.  Let $X,Y\in\R^{\adimn}$ be independent random variables such that $X$ is uniformly distributed in $[-1/2,1/2]^{\adimn}$ and $Y$ is a standard Gaussian random vector.  Let $0<\epsilon<1/2$.  Let $B\subset\R^{\adimn}$ be a \pdized half space.  Then
$$
\P(X\in\Omega,\, X+\epsilon Y\in\Omega)
\leq\P(X\in B,\, X+\epsilon Y\in B)
-\int_{[-\frac{1}{2},\frac{1}{2}]^{\adimn}\cap \partial\Omega}\Big(1-\frac{\vnorm{N(x)}_{1}}{\sqrt{\adimn}}\Big)dx+o(\epsilon).
$$
In particular,
$$
\P(X\in\Omega,\, X+\epsilon Y\in\Omega)
\leq\P(X\in B,\, X+\epsilon Y\in B)+o(\epsilon).
$$
Here the implied constants $o(\epsilon)$ can depend on $\Omega$.
\end{cor}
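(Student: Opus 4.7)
The plan is to derive Corollary~\ref{cor4} as the first-order $\epsilon\to 0^+$ limit of a Euclidean (Lebesgue) analogue of Theorem~\ref{thm2}, in parallel with the passage from noise stability to Gaussian surface area in~\eqref{zero9}.  For $X$ uniform on $[-\tfrac{1}{2},\tfrac{1}{2}]^{\adimn}$ and $Y$ a standard Gaussian, the analogue of~\eqref{zero9} I intend to establish is the asymptotic
\[
\P(X\in\Omega,\,X+\epsilon Y\in\Omega)=\tfrac{1}{2}-\tfrac{\epsilon}{\sqrt{2\pi}}\int_{\partial\Omega\cap[-\frac{1}{2},\frac{1}{2}]^{\adimn}}dx+o(\epsilon),\qquad\epsilon\to 0^+,
\]
where the $\tfrac{1}{2}$ comes from $\P(X\in\Omega)=\tfrac{1}{2}$ (a consequence of $-\Omega=\Omega^c$).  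Robust interpretation of the corollary then requires the robustness integral to carry the universal prefactor $\epsilon/\sqrt{2\pi}$ inherited from this asymptotic.

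To establish the expansion, I parametrize the $O(\epsilon\sqrt{|\log\epsilon|})$-neighborhood of $\partial\Omega$ on the $\Omega$-side by tubular coordinates $x=x_0-rN(x_0)$, with $x_0\in\partial\Omega$ and $r\in[0,\delta)$, under which $dx=(1+O(r))\,dr\,d\sigma(x_0)$.  To leading order $\P(x+\epsilon Y\notin\Omega)\approx\P(\epsilon\langle N(x_0),Y\rangle>r)$, and $\int_0^\infty\P(\epsilon Z>r)\,dr=\epsilon\cdot\E(Z)_+=\epsilon/\sqrt{2\pi}$ for $Z\sim N(0,1)$.  Contributions from points further than $\delta$ from $\partial\Omega$ are exponentially small.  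The identical asymptotic holds with $B$ in place of $\Omega$.

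Next, I prove the Euclidean analogue of Theorem~\ref{thm2}:
\[
\int_{\partial\Omega\cap[-\frac{1}{2},\frac{1}{2}]^{\adimn}}dx\geq\int_{\partial B\cap[-\frac{1}{2},\frac{1}{2}]^{\adimn}}dx+\int_{[-\frac{1}{2},\frac{1}{2}]^{\adimn}\cap\partial\Omega}\Big(1-\tfrac{\vnorm{N(x)}_1}{\sqrt{\adimn}}\Big)dx.
\]
By the coarea formula applied to the projection forgetting the $i$th coordinate, for each $1\leq i\leq\adimn$,
\[
\int_{\partial\Omega\cap[-\frac{1}{2},\frac{1}{2}]^{\adimn}}|N_i(x)|\,dx=\int_{[-\frac{1}{2},\frac{1}{2}]^{\adimn-1}}\#\big\{t\in[-\tfrac{1}{2},\tfrac{1}{2}]:(x',t)\in\partial\Omega\big\}\,dx'.
\]
For a.e.~$x'$ (by Sard's theorem and the $C^{\infty}$ regularity of $\partial\Omega$), the one-dimensional slice $E(x'):=\{t:(x',t)\in\Omega\}$ satisfies $E+1=E^c$ (from $\Omega+v_i=\Omega^c$) and $-E=E^c$, forcing $|E\cap[-\tfrac{1}{2},\tfrac{1}{2}]|=\tfrac{1}{2}$ and hence at least one boundary point of $E$ in $[-\tfrac{1}{2},\tfrac{1}{2}]$.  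Thus $\int_{\partial\Omega}|N_i|\,dx\geq 1$ for each $i$; summing over $i$ yields $\int_{\partial\Omega}\vnorm{N}_1\,dx\geq\adimn$, so $\int_{\partial\Omega}\vnorm{N}_1/\sqrt{\adimn}\,dx\geq\sqrt{\adimn}$.  The identical coarea computation applied to $B$ (on which $\vnorm{N}_1\equiv\sqrt{\adimn}$) gives $\int_{\partial B\cap[-\frac{1}{2},\frac{1}{2}]^{\adimn}}dx=\sqrt{\adimn}$; rearranging yields the inequality.

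Combining the expansion (applied to both $\Omega$ and $B$) with the Euclidean inequality yields Corollary~\ref{cor4}.  The main technical obstacle is in the first step: controlling the tubular-neighborhood argument near the interface of $\partial\Omega$ with $\partial[-\tfrac{1}{2},\tfrac{1}{2}]^{\adimn}$, and accounting for the possibility that $X+\epsilon Y$ leaves the cube.  A standard cutoff argument suffices, since points within $O(\epsilon)$ of this interface contribute only $O(\epsilon)\cdot O(1)$ to $\P(X\in\Omega,\,X+\epsilon Y\notin\Omega)$ and are absorbed in $o(\epsilon)$; membership in $\Omega$ is well-defined globally on $\R^{\adimn}$, and the $O(r)$ curvature corrections from the tubular parametrization contribute only to higher-order terms.
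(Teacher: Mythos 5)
Your overall structure matches the paper's: establish the first-order expansion $\P(X\in\Omega,\,X+\epsilon Y\in\Omega)=\tfrac{1}{2}-\tfrac{\epsilon}{\sqrt{2\pi}}\int_{[-\frac{1}{2},\frac{1}{2}]^{\adimn}\cap\partial\Omega}dx+o(\epsilon)$ (the paper does this in Lemma \ref{lemma32} via the heat semigroup $U_s$ and the divergence theorem, whereas you use tubular coordinates --- two standard routes to the same first-order asymptotic), then apply a Euclidean analogue of the surface-area inequality (the paper's Remark \ref{rk6.5}, which you reprove via the coarea formula for coordinate projections --- again equivalent to the paper's Cauchy projection argument). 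So the proposal is substantially the same proof.

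One subargument is wrong as written. You assert that for a.e.\ $x'$ the slice $E(x')=\{t:(x',t)\in\Omega\}$ satisfies $-E=E^{c}$, deducing $|E\cap[-\tfrac{1}{2},\tfrac{1}{2}]|=\tfrac{1}{2}$. But $-\Omega=\Omega^{c}$ only yields the cross-slice relation $-E(x')=E(-x')^{c}$; the relation $-E(x')=E(x')^{c}$ fails for generic $x'$. Moreover $E+1=E^{c}$ alone does not force $|E\cap[-\tfrac{1}{2},\tfrac{1}{2}]|=\tfrac{1}{2}$ (e.g.\ $E=\bigcup_{k\in\Z}[-\tfrac{1}{4}+2k,\,\tfrac{3}{4}+2k)$ satisfies $E+1=E^{c}$ yet $|E\cap[-\tfrac{1}{2},\tfrac{1}{2}]|=\tfrac{3}{4}$). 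Fortunately you only need that a.e.\ slice has a boundary point in $(-\tfrac{1}{2},\tfrac{1}{2})$, which does follow from $E+1=E^{c}$ alone: if $E$ had no boundary point in $(-\tfrac{1}{2},\tfrac{1}{2})$, then either $(-\tfrac{1}{2},\tfrac{1}{2})\subset E$ or $(-\tfrac{1}{2},\tfrac{1}{2})\subset E^{c}$; in the first case $E+1=E^{c}$ forces $(\tfrac{1}{2},\tfrac{3}{2})\subset E^{c}$, so $\pm\tfrac{1}{2}\in\partial E$, which a Sard/transversality argument excludes for a.e.\ $x'$ (and symmetrically in the second case). Replacing the $-E=E^{c}$ step with this argument repairs the gap, after which your proof is correct and equivalent to the paper's.
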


%
%

\subsection{Background on Gaussian Isoperimetry}

In the 1970s, Borell and Sudakov-Tsirelson proved the Gaussian Isoperimetric Inequality \cite{borell75,sudakov74}: among all sets $\Omega\subset\R^{\adimn}$ of fixed Gaussian measure $\int_{\Omega}d\gamma_{\adimn}(x)$, the smallest Gaussian surface area $\int_{\partial \Omega}\gamma_{\gdimn}(x)dx$ occurs when $\Omega$ is a half space.  That is, $\Omega$ is the set of points lying on one side of a hyperplane.  The works \cite{borell75,sudakov74} used symmetrization methods.  That is, they replace any set $\Omega$ with a ``more symmetric'' set with the same Gaussian measure and with smaller Gaussian surface area.  In 1985, Borell generalized the Gaussian Isoperimetric Inequality to noise stability \cite{borell85}: for any $0<\rho<1$, among among all sets $\Omega\subset\R^{\adimn}$ of fixed Gaussian measure, the maximum value of $\P(X\in \Omega,\, \rho X+Y\sqrt{1-\rho^{2}}\in \Omega)$ occurs when $\Omega$ is a half space.  Once again, Borell used symmetrization methods.  Borell's result \cite{borell85} was further elucidated by many authors, including \cite{ledoux96,burchard01}.

The inequality of \cite{borell85} gained renewed attention due to its applications in theoretical computer science.  In particular, the inequality of \cite{borell85} was a key component in the proof of the Majority is Stablest Theorem \cite{mossel10}, and in the proof of the sharp Unique Games hardness of the MAX-CUT problem \cite{khot07,mossel10}.  Due to this renewed interest, Borell's result was re-proved and strengthened in \cite{mossel12,eldan13}.  The results of \cite{mossel12,eldan13} show that if a set $\Omega\subset\R^{\adimn}$ is close to maximizing the noise stability $\P(X\in \Omega,\, \rho X+Y\sqrt{1-\rho^{2}}\in \Omega)$, then $\Omega$ is close to a half space.  The work \cite{mossel12} uses heat flow methods, and \cite{eldan13} uses stochastic calculus methods.  All known proofs of Borell's inequality \cite{borell85} somehow use translation invariance of the inequality: any translation of a half space is still a half space.

Note that Conjectures \ref{conj1} and \ref{conj2} do not have any translation invariance property.  It is possible to translate a \pdized half space and produce a set that is a not a \pdized half space.  For this reason, all known proofs of Gaussian isoperimetric inequalities seem entirely unable to prove Conjectures \ref{conj1} or \ref{conj2}.

\subsection{Method of Proof of the Main Result}

Theorem \ref{thm2} is proven in an almost elementary way.  Conjectures \ref{conj1} and \ref{conj2} can be restated as isoperimetric problems on the torus equipped with the heat kernel measure on the torus.  For example, the Poisson Summation formula allows the following equivalent formulation of Conjecture \ref{conj2}.
\begin{conj}[\embolden{Restatement of Conjecture \ref{conj2}}]\label{conj2p}
The minimum value of
$$
\int_{[0,1]^{\adimn}\cap(\partial \Omega)}\sum_{z\in\Z^{\adimn}}
e^{2\pi i\langle y,z\rangle}e^{-2\pi^{2}\vnorm{z}_{2}^{2}}dy.
$$
over all \pdized sets $\Omega\subset\R^{\adimn}$ occurs when $\Omega$ is a \pdized half space.
\end{conj}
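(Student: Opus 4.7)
The plan is to prove this restatement by directly showing that the right-hand integral equals the Gaussian surface area $\int_{\partial\Omega}\gamma_{\adimn}(x)\,dx$ for every \pdized set $\Omega$. Once that identification is made, both the admissible class (all \pdized sets) and the conjectured minimizer (a \pdized half space) coincide verbatim with those of Conjecture \ref{conj2}, so the two statements are literally the same.

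First I would observe that $\partial\Omega$ is invariant under $\Z^{\adimn}$-translation: from $\Omega+v=\Omega^{c}$ for each standard basis vector $v$, taking boundaries gives $\partial\Omega+v=\partial(\Omega^{c})=\partial\Omega$, and iterating over the basis yields full $\Z^{\adimn}$-periodicity of $\partial\Omega$. Next I would tile $\R^{\adimn}$ by the unit cubes $[0,1]^{\adimn}+z$, $z\in\Z^{\adimn}$, substitute $x=y+z$ inside each cube, and use the $\Z^{\adimn}$-periodicity of $\partial\Omega$ together with Tonelli's theorem (justified by the rapid decay of $\gamma_{\adimn}$) to obtain
\[
\int_{\partial\Omega}\gamma_{\adimn}(x)\,dx=\int_{[0,1]^{\adimn}\cap\partial\Omega}\sum_{z\in\Z^{\adimn}}\gamma_{\adimn}(y+z)\,dy.
\]

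Finally I would apply the Poisson summation formula to the Schwartz function $\gamma_{\adimn}$, whose Fourier transform (with the convention $\hat f(\xi)=\int f(x)e^{-2\pi i\langle x,\xi\rangle}dx$) is $\hat\gamma_{\adimn}(\xi)=e^{-2\pi^{2}\vnorm{\xi}_{2}^{2}}$, to identify
\[
\sum_{z\in\Z^{\adimn}}\gamma_{\adimn}(y+z)=\sum_{z\in\Z^{\adimn}}e^{2\pi i\langle y,z\rangle}e^{-2\pi^{2}\vnorm{z}_{2}^{2}}.
\]
Substituting back gives the desired identity, completing the reduction.

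There is no real obstacle here; the statement is essentially a bookkeeping exercise via Poisson summation applied to the heat kernel. The only mild care needed is in the Fubini/Tonelli justification for interchanging the sum and the surface integral, and in tracking the Fourier-transform normalization so that the Gaussian decay factor $e^{-2\pi^{2}\vnorm{z}_{2}^{2}}$ appears with the correct constants, both of which are standard.
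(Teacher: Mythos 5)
Your proposal is correct and follows essentially the same route the paper takes (implicitly) in arriving at the restated conjecture: use the $\Z^{\adimn}$-periodicity of $\partial\Omega$ coming from Definition \ref{pddef} to fold the surface integral onto $[0,1]^{\adimn}\cap\partial\Omega$ against the periodized density $\pn(y)=\sum_{z\in\Z^{\adimn}}\gamma_{\adimn}(y+z)$, and then apply Poisson summation (Lemmas \ref{lemma50}--\ref{lemma53}) with $\widehat{\gamma_{\adimn}}(\xi)=e^{-2\pi^{2}\vnormf{\xi}_{2}^{2}}$ to identify $\pn$ with the given Fourier series. This correctly establishes that the displayed integral equals $\int_{\partial\Omega}\gamma_{\adimn}(x)\,dx$ for every \pdized set, so the restatement is verbatim Conjecture \ref{conj2}.
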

Here $\langle y,z\rangle\colonequals\sum_{i=1}^{\adimn}y_{i}z_{i}$ for any $y,z\in\R^{\adimn}$, and $\vnorm{z}_{2}^{2}=\langle z,z\rangle$.

The heat kernel measure $\pn(y)\colonequals\sum_{z\in\Z^{\adimn}}e^{2\pi i\langle y,z\rangle}e^{-2\pi^{2}\vnorm{z}_{2}^{2}}$ is very close to the constant function $1$ (see Lemma \ref{lemma53p}).  This fact may make it difficult to apply Gaussian isoperimetric methods to approach Conjecture \ref{conj2}.  So, we instead treat Conjecture \ref{conj2} as an essentially Euclidean problem.  That is, we solve exactly the analogue of Conjecture \ref{conj2p} when the integrand is the constant function $1$.  In this case, an exact solution follows by projecting $\partial B$ onto each facet of the unit cube, and noting that this projection is injective.  The error between this exact solution and the integral in Conjecture \ref{conj2p} is then small since $\pn$ is a product measure (see Lemma \ref{lemma30}).  This approach allows us to prove Theorem \ref{thm2} using an elementary argument.  The error term $6\cdot 10^{-9}$ arises since this is roughly the supremum norm of $1-p_{1}$.  That is, $6\cdot 10^{-9}$ is roughly the difference of $p_{1}$ from being constant.

\subsection{Other Related Work}


Isoperimetric problem on the torus equipped with Haar measure have been studied in several places including \cite{choksi06,ros01}, though many problems are unresolved here.  Any relation of the present work to \cite{choksi06,ros01} is unclear, since the measures under consideration are different.

Different isoperimetric problems exhibiting ``crystallization'' (or the optimality of sets consisting of
parallel stripes) have been studied in, e.g. \cite{theil06,bourne13,giuliani12,giuliani16,daneri17}, though these studies have typically focused only on $n=2$ or $n=3$.

\section{Poisson Summation Formula}

We recall some standard facts about the Poisson Summation formula.

\begin{lemma}[\embolden{Poisson Summation Formula}, {\cite[p. 252]{stein70}}]\label{lemma50}
Let $f\colon\R^{\adimn}\to\R$ be a $C^{\infty}$ function such that $\abs{f(x)}\leq 100(1+\vnorm{x}_{2})^{-2\adimn}$ for all $x\in\R^{\adimn}$.  Define $\widehat{f}(\xi)=\mathcal{F}(f)(\xi)\colonequals\int_{\R^{\adimn}}f(x)e^{-2\pi i\langle x,\xi\rangle}dx$, $\forall$ $\xi\in\R^{\adimn}$.  Let $\alpha>0$.  Then
$$\sum_{z\in(\alpha\Z)^{\adimn}}f(y+z)=\alpha^{-\adimn}\sum_{w\in(\Z/\alpha)^{n}}\widehat{f}(w)e^{2\pi i\langle y,w\rangle},\qquad\forall\,y\in\R^{\adimn}.$$
\end{lemma}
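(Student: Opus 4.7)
I would follow the classical three-step argument: periodize $f$, expand the resulting periodic function as a Fourier series on the torus, and identify the Fourier coefficients as samples of $\widehat{f}$ on the dual lattice $(\Z/\alpha)^{n}$.

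Step 1 (periodization). Set $F(y) \colonequals \sum_{z\in(\alpha\Z)^{n}} f(y+z)$. The hypothesis $|f(x)| \leq 100(1+\|x\|_{2})^{-2n}$ yields a bound $\sum_{z\in(\alpha\Z)^{n}}|f(y+z)| \leq C_{\alpha}\sum_{w\in\Z^{n}}(1+\|w\|_{2})^{-2n} < \infty$ uniformly in $y$ on compact sets, so the series defining $F$ converges absolutely and uniformly on compacta. Hence $F$ is continuous and satisfies $F(y+\alpha v_{i}) = F(y)$ for every standard basis vector $v_{i}$, and can be regarded as a continuous function on the torus $\R^{n}/(\alpha\Z)^{n}$ with fundamental domain $[0,\alpha]^{n}$.

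Step 2 (Fourier coefficients). For $k\in\Z^{n}$, I compute the Fourier coefficient of $F$ against the character $e^{2\pi i\langle y,k/\alpha\rangle}$:
$$c_{k} \colonequals \alpha^{-n}\int_{[0,\alpha]^{n}} F(y)\,e^{-2\pi i\langle y,k/\alpha\rangle}\,dy.$$
Uniform convergence lets me exchange sum and integral. In the summand indexed by $z\in(\alpha\Z)^{n}$, I substitute $u=y+z$ and use that $\langle z,k/\alpha\rangle\in\Z$, so the exponential is unchanged. The translates $[0,\alpha]^{n}+z$ tile $\R^{n}$, producing
$$c_{k} = \alpha^{-n}\int_{\R^{n}} f(u)\,e^{-2\pi i\langle u,k/\alpha\rangle}\,du = \alpha^{-n}\widehat{f}(k/\alpha).$$

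Step 3 (Fourier inversion). To finish, I would invoke Fourier inversion on the torus to write $F(y) = \sum_{k\in\Z^{n}} c_{k}\,e^{2\pi i\langle y,k/\alpha\rangle}$, which after relabeling $z := k/\alpha\in(\Z/\alpha)^{n}$ is exactly the claimed identity. This is the main obstacle: the hypothesis controls $f$ but not its derivatives, so a priori $\widehat{f}$ need not decay fast enough to make $\sum_{k}|c_{k}|$ summable and give uniform convergence of the Fourier series. I would close this gap by mollification, exploiting the $C^{\infty}$ assumption: replace $f$ by $f_{\epsilon}(x) \colonequals f(x)e^{-\epsilon\|x\|_{2}^{2}}$, which is Schwartz and for which Poisson summation is standard, and then send $\epsilon\to 0^{+}$, using dominated convergence with the envelope $100(1+\|x\|_{2})^{-2n}$ on the periodized side and a matching convergence argument on the Fourier side. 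Equivalently, since the lemma is cited directly from \cite[p.~252]{stein70}, one may simply import the convergence analysis from that reference.
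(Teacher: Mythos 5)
The paper does not actually prove this lemma; it is cited directly from Stein--Weiss \cite[p.~252]{stein70} and used as a black box. So there is no proof in the paper to compare against, and your proposal is the standard textbook argument that the citation refers to.

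Your Steps 1 and 2 are clean and correct: periodization converges absolutely and uniformly on compacta by the decay hypothesis, and the change of variables correctly identifies the Fourier coefficient of $F$ at frequency $k/\alpha$ as $\alpha^{-n}\widehat f(k/\alpha)$. You have also correctly located the genuine difficulty in Step 3: the stated hypotheses give $f\in L^1$ and hence $\widehat f$ bounded and continuous, but they give no decay of $\widehat f$, so there is no a priori reason $\sum_k\abs{\widehat f(k/\alpha)}$ should converge. (The $C^\infty$ assumption does not help here because no bounds on derivatives of $f$ are assumed; indeed the map $L^1\to C_0$ is not onto, and there are standard counterexamples to Poisson summation for $C^\infty$, $L^1$ functions whose Fourier transforms oscillate badly.) This matches the usual statement in Stein--Weiss, which requires decay on $\widehat f$ in addition to decay on $f$; the lemma as written in the paper drops that hypothesis, presumably because it is only ever applied to (dilated) Gaussians, which are Schwartz.

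The specific repair you propose does not quite close the gap, for two reasons. First, $f_\epsilon(x)=f(x)e^{-\epsilon\vnormf{x}_2^2}$ need not be Schwartz: its derivatives involve $\partial^\beta f$, which the hypotheses do not bound, and a $C^\infty$ function with $\abs{f(x)}\leq 100(1+\vnormf{x}_2)^{-2n}$ can have derivatives growing faster than any Gaussian. Second, and more fundamentally, even with a legitimate Schwartz mollifier (e.g.\ $f_\epsilon=f*\phi_\epsilon$, which does put $f_\epsilon$ in the Stein--Weiss class since $\widehat{f_\epsilon}=\widehat f\,\widehat{\phi_\epsilon}$ decays rapidly), passing the limit $\epsilon\to 0^+$ through the Fourier-side sum $\sum_z\widehat{f_\epsilon}(z)e^{2\pi i\langle y,z\rangle}$ requires precisely the uniform summability estimate on $\{\widehat f(z)\}_z$ that is missing; dominated convergence on that side needs the majorant $\sum_z\abs{\widehat f(z)}<\infty$, which is the thing in question. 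The clean ways out are either (a) add the hypothesis $\abs{\widehat f(\xi)}\lesssim(1+\vnormf{\xi}_2)^{-n-\delta}$, as in Stein--Weiss, so that Step 3 is a one-line Fourier inversion, or (b) interpret the right-hand sum as the Abel or Ces\`aro limit, which for the continuous periodic function $F$ recovers $F(y)$ by Fej\'er's theorem without any extra hypothesis. For the functions the paper actually applies this to (Gaussians), $f$ is Schwartz and none of these issues arise, which is why the citation suffices.
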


\begin{lemma}[\embolden{Eigenfunction of the Fourier Transform}, {\cite[p. 173]{stein03a}}]\label{lemma51}
$\forall$ $x\in\R$, define
$$h_{0}(x)\colonequals e^{-\pi x^{2}}.$$
Then
$$\widehat{h_{0}}(y)=h_{0}(y),\qquad\forall\,y\in\R.$$
\end{lemma}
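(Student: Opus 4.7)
The plan is to show that $\widehat{h_0}$ satisfies the same first-order linear ODE as $h_0$ itself, then match values at $0$. The starting point is the observation that $h_0(x)=e^{-\pi x^2}$ satisfies the elementary ODE
$$h_0'(x) = -2\pi x\, h_0(x), \qquad h_0(0)=1.$$

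Next, I would apply the Fourier transform to both sides using the two standard identities (valid for Schwartz functions, which $h_0$ certainly is):
\begin{equation*}
\mathcal{F}(f')(y) \;=\; 2\pi i y\, \widehat{f}(y), \qquad \mathcal{F}\bigl(x f(x)\bigr)(y) \;=\; \frac{i}{2\pi}\, \widehat{f}\,'(y).
\end{equation*}
The first identity is integration by parts; the second comes from differentiating under the integral sign, which is justified by dominated convergence since $|x h_0(x)|$ decays faster than any polynomial. Applying $\mathcal{F}$ to $h_0'(x)+2\pi x h_0(x)=0$ and dividing by $i$ yields $\widehat{h_0}\,'(y) = -2\pi y\, \widehat{h_0}(y)$, which is exactly the ODE satisfied by $h_0$. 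Hence $\widehat{h_0}(y) = C\, e^{-\pi y^2}$ for some constant $C$.

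Finally, I would pin down $C$ by evaluating at the origin: $C = \widehat{h_0}(0) = \int_{\R} e^{-\pi x^2}\, dx$. The classical polar-coordinate trick (squaring and passing to $\R^2$) gives $\int_{\R^2} e^{-\pi(x^2+y^2)}\,dx\,dy = \int_0^{2\pi}\int_0^{\infty} e^{-\pi r^2} r\, dr\, d\theta = 1$, so $C=1$ and $\widehat{h_0}(y) = e^{-\pi y^2} = h_0(y)$. There is no genuine obstacle here; this is a classical computation and the only mild care needed is in justifying differentiation under the integral sign, which is immediate from the Schwartz decay of $h_0$.
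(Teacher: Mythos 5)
The paper does not actually prove this lemma; it is stated as a known fact with a citation to Stein and Shakarchi, so there is no ``paper's proof'' to compare against. Your argument is correct and complete: with the paper's normalization $\widehat f(\xi)=\int_{\R}f(x)e^{-2\pi i x\xi}\,dx$, the identities $\mathcal F(f')(y)=2\pi i y\,\widehat f(y)$ and $\mathcal F(xf(x))(y)=\tfrac{i}{2\pi}\widehat f\,'(y)$ are as you state, applying $\mathcal F$ to $h_0'+2\pi x h_0=0$ does give $\widehat{h_0}\,'(y)=-2\pi y\,\widehat{h_0}(y)$, and the normalization $\widehat{h_0}(0)=\int_{\R}e^{-\pi x^2}\,dx=1$ pins down the constant; this is the standard ODE proof, essentially the one found in the cited textbook.
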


Using the identity $\widehat{h(\cdot/\lambda)}(y)=\lambda \widehat{h}(\lambda y)$, with $\lambda=1/\sqrt{2\pi}$, we get

\begin{lemma}\label{lemma52}
For any $y\in\R$,
$$
\frac{1}{\sqrt{2\pi}}e^{- y^{2}/2}=\mathcal{F}[e^{-2\pi^{2} x^{2}}](y).
$$
\end{lemma}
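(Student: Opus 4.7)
The plan is to apply the standard scaling identity for the Fourier transform, which is essentially already stated in the paragraph preceding Lemma \ref{lemma52}, and then invoke Lemma \ref{lemma51} to collapse the expression. The only work is to choose the dilation parameter $\lambda$ correctly so that the Gaussian $h_0(x/\lambda) = e^{-\pi x^2/\lambda^2}$ matches $e^{-2\pi^2 x^2}$.

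First I would note that setting $\pi/\lambda^2 = 2\pi^2$ forces $\lambda^2 = 1/(2\pi)$, i.e.\ $\lambda = 1/\sqrt{2\pi}$. For this choice of $\lambda$ we have the pointwise identity $h_0(x/\lambda) = e^{-\pi(2\pi)x^2} = e^{-2\pi^2 x^2}$, which rewrites the integrand in the desired Fourier transform in terms of $h_0$.

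Next I would apply the scaling rule $\mathcal{F}[f(\cdot/\lambda)](y) = \lambda \widehat{f}(\lambda y)$ (verified by the change of variables $u = x/\lambda$ in the definition of $\mathcal{F}$ used by the paper, namely $\widehat{f}(\xi) = \int f(x) e^{-2\pi i \langle x, \xi \rangle} dx$), together with Lemma \ref{lemma51} which gives $\widehat{h_0} = h_0$. This yields
\begin{equation*}
\mathcal{F}[e^{-2\pi^2 x^2}](y) = \mathcal{F}[h_0(\cdot/\lambda)](y) = \lambda\, h_0(\lambda y) = \tfrac{1}{\sqrt{2\pi}}\, e^{-\pi y^2/(2\pi)} = \tfrac{1}{\sqrt{2\pi}}\, e^{-y^2/2},
\end{equation*}
which is exactly the claim.

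There is essentially no obstacle here: the result is a one-variable calculation that combines a known eigenfunction identity with the dilation rule of the Fourier transform. The only thing to be slightly careful about is consistency of the Fourier transform normalization (the $2\pi$ in the exponential of $\mathcal{F}$ matches the $2\pi$ appearing in $h_0$, which is precisely why the dilation constant $\lambda = 1/\sqrt{2\pi}$ produces the standard Gaussian density $(2\pi)^{-1/2} e^{-y^2/2}$ on the Fourier side).
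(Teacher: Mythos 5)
Your proof is correct and is precisely the paper's argument: the paper dispatches Lemma \ref{lemma52} in the single sentence preceding its statement, using the dilation rule with $\lambda = 1/\sqrt{2\pi}$ together with Lemma \ref{lemma51}, which is exactly what you do (just written out in full). No difference in approach.
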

%

Combining Lemmas \ref{lemma50}, \ref{lemma51} and \ref{lemma52},


\begin{lemma}\label{lemma53}
$\forall\,x\in\R$,
$$
\sum_{z\in\ksz}\gamma_{1}(x+z)
=\ksexpone\sum_{z\in\zks}e^{-2\pi^{2}z^{2}}e^{2\pi ixz}
=1+\ksexpone\sum_{k=1}^{\infty}2e^{-2\pi^{2}k^{2}}\cos(2\pi xk).
$$
\end{lemma}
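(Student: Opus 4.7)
The plan is to apply Lemma \ref{lemma50} (Poisson Summation) directly to $f = \gamma_1$ with $\alpha = 1$. Since $\gamma_1(x) = (2\pi)^{-1/2} e^{-x^2/2}$ is $C^\infty$ and has Gaussian decay, the hypothesis $|\gamma_1(x)| \leq 100(1 + \|x\|_2)^{-2}$ of Lemma \ref{lemma50} is trivially satisfied, and we obtain
$$\sum_{z \in \Z} \gamma_1(x+z) = \sum_{z \in \Z} \widehat{\gamma_1}(z) \, e^{2\pi i x z}.$$

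The key computation is to identify $\widehat{\gamma_1}$. Lemma \ref{lemma52} asserts $\gamma_1(y) = \mathcal{F}[e^{-2\pi^2 t^2}](y)$, so $\gamma_1$ is itself the Fourier transform of the even function $t \mapsto e^{-2\pi^2 t^2}$. Applying Fourier inversion (or equivalently using $\widehat{\widehat{f}} = f$ for even $f$) gives $\widehat{\gamma_1}(\xi) = e^{-2\pi^2 \xi^2}$, and substituting this into the previous display yields the first claimed equality.

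For the second equality, split the sum $\sum_{z \in \Z} e^{-2\pi^2 z^2} e^{2\pi i x z}$ into the term $z = 0$ and the paired terms $z = k$ and $z = -k$ for each $k \geq 1$. Since $e^{-2\pi^2(-k)^2} = e^{-2\pi^2 k^2}$ and $e^{2\pi i x k} + e^{-2\pi i x k} = 2\cos(2\pi x k)$, this immediately gives
$$\sum_{z \in \Z} e^{-2\pi^2 z^2} e^{2\pi i x z} = 1 + \sum_{k=1}^{\infty} 2 e^{-2\pi^2 k^2} \cos(2\pi x k),$$
and the sum converges absolutely by the Gaussian factor.

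There is no serious obstacle: the argument is a direct assembly of the three preceding lemmas. The only subtlety worth flagging is the direction of the Fourier transform — Lemma \ref{lemma52} expresses $\gamma_1$ as $\mathcal{F}[\cdot]$ applied to a Gaussian, whereas Poisson summation requires $\mathcal{F}[\gamma_1]$ itself; the evenness of $t \mapsto e^{-2\pi^2 t^2}$ is what makes these two quantities coincide.
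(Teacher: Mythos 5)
Your proof is correct and follows exactly the route the paper intends: the paper states Lemma~\ref{lemma53} with no written proof, saying only that it results from combining Lemmas~\ref{lemma50}, \ref{lemma51}, and \ref{lemma52}, and your assembly of Poisson summation applied to $\gamma_1$ with $\alpha=1$, identification of $\widehat{\gamma_1}$, and pairing of $z=\pm k$ terms is precisely that combination. The one nuance you correctly flag and resolve — that Lemma~\ref{lemma52} gives $\gamma_1=\mathcal{F}[e^{-2\pi^2(\cdot)^2}]$ rather than $\mathcal{F}[\gamma_1]$ directly, and that evenness of the Gaussian lets you invert to get $\widehat{\gamma_1}(\xi)=e^{-2\pi^2\xi^2}$ — is exactly the small detail the paper leaves to the reader.
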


\section{Weak Version of Isoperimetric Conjecture}

We denote the periodization $\pn(x)$ of the Gaussian density by
\begin{equation}\label{five0}
\pn(x)\colonequals\sum_{z\in\Z^{\adimn}}\gamma_{\adimn}(x+z),\qquad\forall\,x\in\R^{\adimn}.
\end{equation}

 We first note that $\pn$ is a product measure.  This follows directly from the definition of $\pn$.

\begin{lemma}\label{lemma30}
Let $x=(x_{1},\ldots,x_{\adimn})\in\R^{\adimn}$.  Then
$$\pn(x)=\prod_{i=1}^{\adimn}p_{1}(x_{i}).$$
\end{lemma}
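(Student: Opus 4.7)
The plan is a direct unpacking of the definitions, since the statement is really just the observation that the product structure of the Gaussian density $\gamma_n$ is preserved under summing over the product lattice $\mathbb{Z}^n$.

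First, I would note that the Gaussian density factors: from $\gamma_n(y) = (2\pi)^{-n/2} e^{-(y_1^2+\cdots+y_n^2)/2}$ we have
\[
\gamma_n(y) = \prod_{i=1}^{n} \gamma_1(y_i), \qquad y=(y_1,\ldots,y_n)\in\mathbb{R}^n.
\]
Applying this with $y=x+z$ where $z=(z_1,\ldots,z_n)\in\mathbb{Z}^n$ gives
\[
\gamma_n(x+z) = \prod_{i=1}^{n} \gamma_1(x_i + z_i).
\]

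Second, I would substitute this into the defining sum \eqref{five0} and interchange the sum over $\mathbb{Z}^n$ with the product. Writing the sum over $\mathbb{Z}^n$ as an iterated sum over the coordinates $z_1,\ldots,z_n\in\mathbb{Z}$, each coordinate factor depends only on the corresponding index, so by distributivity
\[
p_n(x) = \sum_{z\in\mathbb{Z}^n}\prod_{i=1}^{n}\gamma_1(x_i+z_i) = \prod_{i=1}^{n}\sum_{z_i\in\mathbb{Z}}\gamma_1(x_i+z_i) = \prod_{i=1}^{n} p_1(x_i).
\]
The interchange is justified because every term $\gamma_1(x_i+z_i)$ is nonnegative and the total sum converges absolutely (by Lemma \ref{lemma53} applied in each coordinate, or simply by Gaussian decay), so Tonelli/Fubini for sums applies.

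There is no real obstacle here — the only thing to be careful about is the absolute convergence that legitimizes the swap of sum and product, and this is immediate from the Gaussian decay of $\gamma_1$. No further machinery is needed.
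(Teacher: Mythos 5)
Your proof is correct and is exactly the "direct from the definition" argument the paper alludes to (the paper gives no explicit proof, simply stating the lemma follows directly from the definition of $p_n$). The factorization $\gamma_n(x+z)=\prod_i\gamma_1(x_i+z_i)$ together with Tonelli to exchange the sum over $\mathbb{Z}^n$ with the product is precisely the intended reasoning.
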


We now note that $p_{1}$ is remarkably close to the constant function $1$.

\begin{lemma}\label{lemma53p}
Let $x_{1}\in\R$.  Then
$$\abs{1-p_{1}(x_{1})}\leq 54\cdot 10^{-10},\qquad\forall\, x_{1}\in\R.$$
\end{lemma}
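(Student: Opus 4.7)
The plan is to use Lemma \ref{lemma53} directly: with $n=1$ the Poisson summation formula gives
$$p_{1}(x_{1}) = 1 + \sum_{k=1}^{\infty} 2 e^{-2\pi^{2}k^{2}} \cos(2\pi x_{1} k),$$
so the absolute value bound $|\cos| \leq 1$ yields
$$|1 - p_{1}(x_{1})| \leq \sum_{k=1}^{\infty} 2 e^{-2\pi^{2}k^{2}},$$
uniformly in $x_{1}$. The rest of the argument is just numerical estimation of this rapidly convergent series.

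The dominant term is $k=1$, contributing $2e^{-2\pi^{2}}$. Since $2\pi^{2} \approx 19.7392$, one has $e^{-2\pi^{2}} \approx 2.67\cdot 10^{-9}$, so $2e^{-2\pi^{2}} \approx 5.35 \cdot 10^{-9}$. For the tail, I would use the crude comparison $e^{-2\pi^{2}k^{2}} \leq e^{-8\pi^{2}} e^{-2\pi^{2}(k^{2}-4)}$ for $k \geq 2$ (or simply $k^{2} \geq 4k$ for $k \geq 4$ together with $k=2,3$ handled directly) to obtain
$$\sum_{k=2}^{\infty} 2 e^{-2\pi^{2}k^{2}} \leq \frac{2 e^{-8\pi^{2}}}{1 - e^{-4\pi^{2}}} \ll 10^{-30},$$
which is negligible relative to the $k=1$ term. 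Adding these and checking that the sum is at most $5.4 \cdot 10^{-9} = 54 \cdot 10^{-10}$ completes the proof.

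The only step requiring care is the numerical estimate of $2e^{-2\pi^{2}}$: one must verify that it does not exceed $54 \cdot 10^{-10}$, and the margin is quite tight (roughly $0.05 \cdot 10^{-9}$ to spare once the tail is included). I would carry out this verification by bounding $2\pi^{2}$ from below by a concrete rational approximation (say $2\pi^{2} \geq 19.739$) and using $e^{-19.739} \leq 10^{-19.739/\ln 10} \leq 10^{-8.572}$, giving $2 e^{-2\pi^{2}} \leq 2 \cdot 10^{-8.572} < 5.36 \cdot 10^{-9}$. This is the only ``obstacle,'' and it is purely a matter of careful arithmetic rather than a conceptual difficulty.
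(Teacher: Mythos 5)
Your proposal is correct and takes essentially the same route as the paper: both use Lemma \ref{lemma53} with $\lvert\cos\rvert\leq 1$ to reduce to bounding $\sum_{k\geq 1}2e^{-2\pi^{2}k^{2}}$, observe that the $k=1$ term $2e^{-2\pi^{2}}\approx 5.35\cdot 10^{-9}$ dominates, and show the tail is negligible. The only (cosmetic) difference is in the tail estimate: you use a geometric-series comparison for $k\geq 2$, whereas the paper peels off $k=1,2$ explicitly and then bounds $k\geq 3$ by an integral comparison.
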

\begin{proof}
Using Lemma \ref{lemma53} and an integral comparison,
\begin{flalign*}
\abs{p_{1}(x_{1})-1}&\stackrel{\eqref{five0}}{=}\Big|\sum_{z\in\Z}\gamma_{1}(x_{1}+z)-1\Big|
\leq2\sum_{k=1}^{\infty}e^{-2\pi^{2}k^{2}}
=2(e^{-2\pi ^{2}}+e^{-8\pi^{2}})+2\sum_{k=3}^{\infty}e^{-2\pi^{2}k^{2}}\\
&\leq 2(e^{-2\pi ^{2}}+e^{-8\pi^{2}})+2\int_{2}^{\infty}e^{-2\pi^{2}y^{2}}dy
\leq 2(e^{-2\pi ^{2}}+e^{-8\pi^{2}})+2\int_{2}^{\infty}ye^{-2\pi^{2}y^{2}}dy\\  
&= 2(e^{-2\pi ^{2}}+e^{-8\pi^{2}})+\pi^{-2}e^{-8\pi^{2}}
\leq 54\cdot 10^{-10}.
\end{flalign*}
\end{proof}

Combining Lemmas \ref{lemma30} and \ref{lemma53} proves the Main Theorem, Theorem \ref{thm2}.

\begin{proof}[Proof of Theorem \ref{thm2}]
$\forall$ $1\leq i\leq \adimn$, let $v_{i}\in\R^{\adimn}$ be the vector with a $1$ in its $i^{th}$ coordinate and a $0$ in all other coordinates.  Let $\Pi_{i}\colon[0,1]^{\adimn}\to[0,1]^{\adimn}$ be the projection onto the facet of the cube perpendicular to the $i^{th}$ coordinate, so that $\Pi_{i}(x)=x-\langle x,v_{i}\rangle v_{i}$ for all $x\in[0,1]^{\adimn}$.  Since $\Omega$ is \pdized, Definition \ref{pddef} implies that
\begin{equation}\label{five4}
\Pi_{i}([0,1]^{\adimn}\cap \partial\Omega)=\Pi_{i}([0,1]^{\adimn}),\qquad\forall\,1\leq i\leq\adimn.
\end{equation}

We first consider the case that $\partial\Omega$ consists of a finite number of flat polyhedral facets.  If $F\subset[0,1]^{\adimn}$ is one such facet, and if $N(x)$ is a unit normal vector at $x\in F$, then Lemmas \ref{lemma30} and \ref{lemma53p} together with the Cauchy projection formula (or the coordinate definition of a surface integral) imply
\begin{flalign*}
\int_{F}\abs{\langle N(x),v_{i}\rangle} \pn(x)dx
&=\int_{F}\abs{\langle N(x),v_{i}\rangle} \prod_{j=1}^{\adimn}p_{1}(x_{j})dx\\
&\geq(1-54\cdot 10^{-10})\int_{F}\abs{\langle N(x),v_{i}\rangle} \prod_{j\colon j\neq i}p_{1}(x_{j})dx\\
&=(1-54\cdot 10^{-10})\int_{\Pi_{i}(F)} \prod_{j\colon j\neq i}p_{1}(x_{j})dx.
\end{flalign*}
Summing over $1\leq i\leq \adimn$, we get
$$\int_{F}\vnorm{N(x)}_{1} \pn(x)dx
\geq(1-54\cdot 10^{-10})\sum_{i=1}^{\adimn}\int_{\Pi_{i}(F)} \prod_{j\colon j\neq i}p_{1}(x_{j})dx.$$
By approximating an arbitrary $C^{\infty}$ manifold $\partial\Omega$ by a set of flat polyhedral faces, we get
\begin{equation}\label{five4.5}
\int_{[0,1]^{\adimn}\cap \partial\Omega}\vnorm{N(x)}_{1} \pn(x)dx
\geq(1-54\cdot 10^{-10})\sum_{i=1}^{\adimn}\int_{\Pi_{i}([0,1]^{\adimn}\cap \partial\Omega)} \prod_{j\colon j\neq i}p_{1}(x_{j})dx.
\end{equation}
Then, using \eqref{five4}, we get
\begin{flalign*}
\int_{[0,1]^{\adimn}\cap \partial\Omega}\vnorm{N(x)}_{1} \pn(x)dx
&\geq(1-54\cdot 10^{-10})\sum_{i=1}^{\adimn}\int_{\Pi_{i}([0,1]^{\adimn})}\prod_{j\colon j\neq i}p_{1}(x_{j})dx\\
&=(1-54\cdot 10^{-10})\adimn\prod_{i=1}^{\adimn-1}\int_{0}^{1}p_{1}(x_{1})dx_{1}=(1-54\cdot 10^{-10})\adimn.
\end{flalign*}
In the last line, we used Lemma \ref{lemma53} (or just the definition \eqref{five0} of $p_{1}$), which implies that $\int_{0}^{1}p_{1}(x_{1})dx_{1}=1$.  Adding and subtracting the same term, we get
$$\sqrt{\adimn}\int_{[0,1]^{\adimn}\cap \partial\Omega}\pn(x)dx
+\int_{[0,1]^{\adimn}\cap \partial\Omega}(\vnorm{N(x)}_{1}-\sqrt{\adimn})\pn(x)dx\geq(1-54\cdot 10^{-10})\adimn.$$
Dividing by $\sqrt{\adimn}$ and using $\int_{\partial\Omega}\gamma_{\adimn}(x)dx
\stackrel{\eqref{five0}}{=}\int_{[0,1]^{\adimn}\cap \partial\Omega}\pn(x)dx$,
$$\int_{ \partial\Omega}\gamma_{n}(x)dx
\geq(1-54\cdot 10^{-10})\sqrt{\adimn}
+\int_{\partial\Omega}\left(1-\frac{\vnorm{N(x)}_{1}}{\sqrt{\adimn}}\right)\gamma_{n}(x)dx.$$
Then \eqref{five1} follows since $\int_{\partial B}\gamma_{\adimn}(x)dx
=\sum_{z\in (\Z/\sqrt{\adimn})}\gamma_{1}(z)
=\sqrt{\adimn}\sum_{w\in(\sqrt{\adimn}\,\Z)}e^{-2\pi^{2}w^{2}}$
by rotating $\partial B$ so that all of its hyperplanes are perpendicular to the $x_{1}$-axis, using Lemma \ref{lemma50}, and using $\adimn\geq2$.  Also, $\vnorm{N(x)}_{1}\leq\sqrt{\adimn}$ for all $x\in\partial\Omega$, so \eqref{five2} follows from \eqref{five1}.
\end{proof}
\begin{remark}\label{rk5}
The ``robustness'' term in Theorem \ref{thm2} can be improved in the following way.  Equation \eqref{five4.5} can be improved so that it counts multiple preimages of $\Pi_{i}([0,1]^{\adimn}\cap \Omega)$.  For any $1\leq i\leq \adimn$ and for any $x\in\partial\Omega$, let $\absf{\Pi_{i}^{-1}\Pi_{i}(x)}$ denote the number of preimages of $\Pi_{i}(x)$ under $\Pi_{i}$.  Then \eqref{five4.5} can be improved to
$$
\int_{[0,1]^{\adimn}\cap \partial\Omega}\sum_{i=1}^{\adimn}\frac{\abs{(N(x))_{i}}}{\absf{\Pi_{i}^{-1}\Pi_{i}(x)}} \pn(x)dx
\geq(1-54\cdot 10^{-10})\sum_{i=1}^{\adimn}\int_{\Pi_{i}([0,1]^{\adimn}\cap \partial\Omega)} \prod_{j\colon j\neq i}p_{1}(x_{j})dx.
$$
This leads to the following improvement in Theorem \ref{thm2}:
$$
\int_{\partial\Omega}\gamma_{\adimn}(x)dx\geq(1-6\cdot 10^{-9})\int_{\partial B}\gamma_{\adimn}(x)dx
+\int_{\partial\Omega}\Big(1-\sum_{i=1}^{\adimn}\frac{\abs{(N(x))_{i}}}{\sqrt{\adimn}\absf{\Pi_{i}^{-1}\Pi_{i}(x)}} \Big)\gamma_{\adimn}(x)dx.
$$
\end{remark}
\begin{remark}\label{rk6.5}
The only properties of $\pn$ used in the proof of Theorem \ref{thm2} are Lemmas \ref{lemma30} and \ref{lemma53p}, i.e. that $\pn$ is a product of one-dimensional probability measures, and each product term is close to $1$.  For example, one can replace the Gaussian measure with Lebesgue measure and prove the following similar (sharp) inequality: for any \pdized set $\Omega\subset\R^{\adimn}$,
$$\int_{[-\frac{1}{2},\frac{1}{2}]^{\adimn}\cap \partial\Omega}dx
\geq\int_{[-\frac{1}{2},\frac{1}{2}]^{\adimn}\cap \partial B}dx
+\int_{[-\frac{1}{2},\frac{1}{2}]^{\adimn}\cap \partial\Omega}\Big(1-\frac{\vnorm{N(x)}_{1}}{\sqrt{\adimn}}\Big)dx.$$
\end{remark}
\begin{remark}\label{rk6}
Let $\Omega$ be a \pdized set.  Applying the divergence theorem to the vector field $-\frac{\pn(x)}{\pi\sqrt{\adimn}}\nabla \sin(\pi(x_{1}+\cdots+x_{\adimn}))$, we get
\begin{flalign*}
&-\int_{[0,1]^{\adimn}\cap\partial\Omega}\frac{\sum_{i=1}^{\adimn}(N(x))_{i}}{\sqrt{\adimn}}\cos(\pi(x_{1}+\cdots+x_{\adimn}))\pn(x)dx\\
&=\pi\sqrt{\adimn}\int_{[0,1]^{\adimn}\cap\Omega}\left(\sin(\pi(x_{1}+\cdots+x_{\adimn}))-\Big(\frac{\sum_{i=1}^{\adimn}\frac{\partial}{\partial x_{i}}\pn(x)}{\sqrt{\adimn}\,\pn(x)}\Big)\cos(\pi(x_{1}+\cdots+x_{\adimn}))\right)\pn(x)dx.
\end{flalign*}
The integral of the $\cos$ term is bounded in absolute value by $10^{-7}\sqrt{\adimn}$ by Lemmas \ref{lemma30} and \ref{lemma53p}.   In the case that $\Omega=\{x\in\R^{\adimn}\colon \sin(\pi(x_{1}+\cdots+x_{\adimn}))\geq0\}$, we then get
$$
\abs{\int_{[0,1]^{\adimn}\cap\partial\Omega}\frac{\vnorm{N(x)}_{1}}{\sqrt{\adimn}}\gamma_{\adimn}(x)dx
-\pi\sqrt{\adimn}\int_{[0,1]^{\adimn}\cap\Omega}\sin(\pi(x_{1}+\cdots+x_{\adimn}))\gamma_{\adimn}(x)dx}
\leq 10^{-7}\sqrt{\adimn}.
$$
So, if $\Omega$ is close to a \pdized half space, then the ``robustness'' term $\int_{\partial\Omega}\big(1-\frac{\vnorm{N(x)}_{1}}{\sqrt{n}}\big)\gamma_{\adimn}(x)dx$ in Theorem \ref{thm2} and Corollary \ref{cor2} measures how close $\Omega$ is to a \pdized half space.
\end{remark}

\section{Weak Versions of Noise Stability Conjecture}

Theorem \ref{thm2} implies a similar statement for noise stability from Definition \ref{noisedef} with parameters $0<\rho<1$ that are close to $1$, using routine methods, if some smoothness is assumed for the boundary of $\Omega\subset\R^{\adimn}$.

In this section, when $\Omega\subset\R^{\adimn}$, we denote
$$f\colonequals 1_{\Omega}\colon\R^{\adimn}\to\R.$$

\begin{lemma}[{\cite[Proof of Lemma 3.1]{kane11}}]\label{lemma27}
Let $d>0$.  Let $g\colon\R^{\adimn}\to\R$ be a degree $d$ polynomial.  Let $\Omega=\{x\in\R^{\adimn}\colon g(x)\geq0\}$.  Let $X,Y\in\R^{\adimn}$ be independent standard Gaussian random vectors.  Let $\epsilon>0$.  Then
$$\P(f(X)=1,\, f(X+\epsilon Y)=0)=\frac{\epsilon}{\sqrt{2\pi}}\int_{\partial\Omega}\gamma_{\adimn}(x)dx+o(\epsilon).$$
\end{lemma}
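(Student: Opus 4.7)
The plan is to analyze the probability by conditioning on $X$, writing
\[
\P(f(X) = 1,\, f(X + \epsilon Y) = 0) = \int_{\Omega} P_{\epsilon}(x)\, \gamma_{\adimn}(x)\, dx, \qquad P_{\epsilon}(x) := \P(x + \epsilon Y \notin \Omega),
\]
and then isolating the contribution from a thin tube around $\partial\Omega$. Since $P_{\epsilon}(x) \leq \P(\epsilon\vnorm{Y}_{2} \geq d(x, \partial\Omega))$, the inner probability decays super-polynomially in $d(x, \partial\Omega)/\epsilon$. Choosing $h := \epsilon \log(1/\epsilon)$, the contribution from $\{x \in \Omega : d(x, \partial\Omega) > h\}$ is therefore $o(\epsilon)$.

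Inside the tube $\{x \in \Omega : 0 < d(x, \partial\Omega) < h\}$, I would parametrize $x = p - s N(p)$ with $p \in \partial\Omega$ and $s \in (0, h)$; the Jacobian with respect to $d\sigma(p)\, ds$ equals $1 + O(s)$. A first-order Taylor expansion at $p$, together with the identity $\nabla g(p) = -\abs{\nabla g(p)}\, N(p)$ (valid on the smooth part of $\partial\Omega$), gives
\[
g(p - sN(p) + \epsilon Y) = \abs{\nabla g(p)}\bigl(s - \epsilon \langle Y, N(p)\rangle\bigr) + O\bigl(s^{2} + \epsilon^{2}\vnorm{Y}_{2}^{2}\bigr),
\]
so that $P_{\epsilon}(p - s N(p)) \to 1 - \Phi(s/\epsilon)$, where $\Phi$ is the one-dimensional standard Gaussian CDF (using that $\langle Y, N(p)\rangle \sim N(0,1)$ because $N(p)$ is a unit vector). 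Approximating $\gamma_{\adimn}(p - sN(p)) \approx \gamma_{\adimn}(p)$ for $s < h$ and substituting $u := s/\epsilon$, the tube integral reduces to
\[
\epsilon \int_{\partial\Omega} \gamma_{\adimn}(p)\, d\sigma(p) \int_{0}^{h/\epsilon} (1 - \Phi(u))\, du + o(\epsilon),
\]
and the identity $\int_{0}^{\infty}(1 - \Phi(u))\, du = \E[\max(Y_{1}, 0)] = 1/\sqrt{2\pi}$ delivers the desired leading-order formula.

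The main obstacle will be technical rather than conceptual, namely globalizing the local tube argument. The polynomial zero set $\partial\Omega$ may be unbounded and may contain a singular locus $S := \{g = 0,\, \nabla g = 0\}$; the unbounded portion is handled by truncating to a ball of radius $C\sqrt{\log(1/\epsilon)}$, outside of which $\gamma_{\adimn}$ decays faster than any power of $\epsilon$, while $S$ is a proper real algebraic subvariety of $\partial\Omega$ and hence has zero $(\adimn-1)$-dimensional Hausdorff measure. Since the tubular neighborhood is only a local diffeomorphism, I would use a partition of unity over a finite cover of the truncated smooth part of $\partial\Omega$, exploiting that $h = \epsilon \log(1/\epsilon)$ eventually falls below the local reach. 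Finally, the Taylor remainder $O(s^{2} + \epsilon^{2}\vnorm{Y}_{2}^{2})$ contributes $o(\epsilon)$ after integration because $s = \epsilon u$ with $u \lesssim \log(1/\epsilon)$ and $\vnorm{Y}_{2}$ has Gaussian tails; the polynomial hypothesis on $g$ is essential here, since it caps the number of roots of $t \mapsto g(X + t\epsilon Y)$ by $d$ and thereby prevents multi-crossing configurations of the segment $[X, X + \epsilon Y]$ with $\partial\Omega$ from contributing at order $\epsilon$.
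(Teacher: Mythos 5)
Your argument reaches the right answer, but it is a genuinely different route from the one the paper takes. The paper defers Lemma \ref{lemma27} to \cite[Proof of Lemma 3.1]{kane11} and then, in Lemma \ref{lemma32}, reproduces that argument: set $U_{s}f(x)=\int f(x+y\sqrt{2s})\gamma_{\adimn}(y)\,dy$, use the heat equation $\tfrac{d}{ds}U_{s}f=\Delta U_{s}f$ to write $\P(f(X)=1,f(X+\epsilon Y)=0)$ as an $s$-integral of $\int_{\Omega}\Delta U_{s}1_{\Omega^{c}}\,d\mu$, push the Laplacian to the boundary via the divergence theorem, and extract the leading term from the pointwise asymptotic $\nabla U_{s}1_{\Omega^{c}}(x)=\tfrac{1}{2\sqrt{\pi s}}N(x)+o(s^{-1/2})$ on $\partial\Omega$. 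Your proof instead conditions on $X$, foliates a tube of width $h=\epsilon\log(1/\epsilon)$ by normal coordinates $x=p-sN(p)$, linearizes $g$ at $p$ to obtain $P_{\epsilon}(p-sN(p))\approx 1-\Phi(s/\epsilon)$, and integrates $\int_{0}^{\infty}(1-\Phi(u))\,du=1/\sqrt{2\pi}$. Both decompositions produce the same $\epsilon/\sqrt{2\pi}$ coefficient, but they distribute the work differently. The heat-flow version does all of the geometry in a single stroke through the divergence theorem; the only asymptotics needed are one-dimensional in $s$, and it never has to set up a tubular coordinate system, so it dodges the reach/singular-locus issues almost entirely. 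Your version is more transparent about \emph{where} the $1/\sqrt{2\pi}$ comes from (the Gaussian mean of the positive part), but it pushes the difficulty into making the tube change of variables uniform. The points you flag as ``technical'' are exactly where the real content hides: the reach of $\partial\Omega$ degenerates near the zero-measure singular locus $\{g=0,\nabla g=0\}$ and wherever $\abs{\nabla g}$ is small, so ``$h$ eventually falls below the local reach'' is not uniform and a compactness/partition-of-unity statement does not close the gap by itself; one needs a quantitative estimate (e.g.\ polynomial anticoncentration or a Milnor--Thom type bound on the volume of the bad tube) to show that the region where the first-order expansion and the normal parametrization break down contributes $o(\epsilon)$. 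This is precisely the role the polynomial hypothesis plays in Kane's treatment, beyond simply bounding the number of roots of $t\mapsto g(X+t\epsilon Y)$; your sketch names the hypothesis but does not yet deploy it for that purpose. Modulo filling that in, the computation is sound.
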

Here and below, the implied constant $o(\epsilon)$ can depend on $g$.

\begin{lemma}[{\cite[Lemma 3.4]{kane11}}]\label{lemma28}
Let $d>0$.  Let $g\colon\R^{\adimn}\to\R$ be a degree $d$ polynomial.  Let $\Omega=\{x\in\R^{\adimn}\colon g(x)\geq0\}$.  Let $X,Y\in\R^{\adimn}$ be independent standard Gaussian random vectors.  Let $\epsilon>0$.  Then
$$\P(f(X)\neq f(X(1+\epsilon)))\leq d\epsilon\sqrt{\frac{\adimn}{4\pi}}.$$
\end{lemma}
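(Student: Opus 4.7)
The idea is to reduce to a one-dimensional estimate via polar decomposition. Write $X=R\Theta$, where $R=\vnorm{X}_{2}$ is chi-distributed with $\adimn$ degrees of freedom (density $\rho(r)=r^{\adimn-1}e^{-r^{2}/2}/(2^{\adimn/2-1}\Gamma(\adimn/2))$ on $r>0$) and $\Theta=X/\vnorm{X}_{2}$ is uniformly distributed on $S^{\adimn-1}$, independent of $R$. Because $X$ and $X(1+\epsilon)$ lie on the same ray through the origin, the event $\{f(X)\neq f(X(1+\epsilon))\}$ agrees, up to a null set, with the event that the single-variable polynomial $p_{\Theta}(r)\colonequals g(r\Theta)$, of degree at most $d$, changes sign between $r=R$ and $r=R(1+\epsilon)$; by the intermediate value theorem this forces some positive real root of $p_{\Theta}$ to lie in $[R,R(1+\epsilon)]$.

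Conditioning on $\Theta$ and denoting by $\tau_{1}(\Theta),\ldots,\tau_{k_{+}(\Theta)}(\Theta)$ the positive real roots of $p_{\Theta}$, a union bound yields
$$\P\bigl(f(X)\neq f(X(1+\epsilon))\mid\Theta\bigr)\leq\sum_{i=1}^{k_{+}(\Theta)}\P\bigl(R\in[\tau_{i}(\Theta)/(1+\epsilon),\,\tau_{i}(\Theta)]\bigr).$$
Each factor is controlled via the change of variable $r=\tau/u$, which turns $\int_{\tau/(1+\epsilon)}^{\tau}\rho(r)\,dr$ into $\int_{1}^{1+\epsilon}(\tau/u)\rho(\tau/u)u^{-1}\,du$; since $u^{-1}\leq1$ on this interval, the integral is at most $\epsilon\sup_{r>0}r\rho(r)$. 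The supremum is attained at $r=\sqrt{\adimn}$, and Stirling's lower bound $\Gamma(\adimn/2)\geq\sqrt{4\pi/\adimn}\,(\adimn/(2e))^{\adimn/2}$ then gives $\sup_{r>0}r\rho(r)\leq\sqrt{\adimn/\pi}$.

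Taking expectations over $\Theta$ yields $\P(f(X)\neq f(X(1+\epsilon)))\leq\epsilon\sqrt{\adimn/\pi}\cdot\E_{\Theta}[k_{+}(\Theta)]$. The naive bound $k_{+}(\Theta)\leq d$ only gives $d\epsilon\sqrt{\adimn/\pi}$, which is a factor of $2$ short. The decisive step is to recover this factor via reflection symmetry: the uniform measure on $S^{\adimn-1}$ is invariant under $\Theta\mapsto-\Theta$, and $p_{-\Theta}(r)=p_{\Theta}(-r)$, so the positive real roots of $p_{-\Theta}$ are in bijection with the negative real roots of $p_{\Theta}$. Hence $\E_{\Theta}[k_{+}(\Theta)]=\E_{\Theta}[k_{-}(\Theta)]$, and their sum is at most the total number of nonzero real roots of $p_{\Theta}$, which is bounded by $d$; therefore $\E_{\Theta}[k_{+}(\Theta)]\leq d/2$, and combining everything yields the advertised bound $d\epsilon\sqrt{\adimn/(4\pi)}$. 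I expect the reflection/parity step to be the main obstacle, since the union bound alone misses the sharp constant by exactly this factor of $2$.
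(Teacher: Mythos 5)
The paper does not prove this lemma; it simply cites \cite[Lemma 3.4]{kane11} and uses the result as a black box. Your proof is correct, and the route you take — polar decomposition $X=R\Theta$, reduction to a one-variable polynomial $p_\Theta$ of degree $\le d$ along each ray, a union bound over the positive real roots, the bound $\P(R\in[\tau/(1+\epsilon),\tau])\le\epsilon\sup_{r>0}r\rho(r)$ via the substitution $r=\tau/u$, Stirling for the supremum, and the reflection $\Theta\mapsto-\Theta$ to cut $\E_\Theta[k_+]$ down to $d/2$ — is, as far as I can tell, exactly Kane's argument; you have reconstructed the cited proof rather than produced an alternative one. All the steps check out: the change of variables gives $\int_1^{1+\epsilon}(\tau/u)\rho(\tau/u)u^{-1}\,du\le\epsilon\sup r\rho(r)$ because $u^{-1}\le1$; the maximizer of $r\rho(r)$ is $r=\sqrt{\adimn}$; Stirling's lower bound $\Gamma(\adimn/2)\ge\sqrt{4\pi/\adimn}\,(\adimn/(2e))^{\adimn/2}$ yields $\sup r\rho(r)\le\sqrt{\adimn/\pi}$; and the identity $p_{-\Theta}(r)=p_\Theta(-r)$ together with the distributional invariance of $\Theta$ under negation gives $\E_\Theta[k_+]=\E_\Theta[k_-]$, hence $\E_\Theta[k_+]\le d/2$ since $k_++k_-\le d$ pointwise (the null set where $p_\Theta\equiv0$ is harmless when $g\not\equiv0$). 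Combining gives $\epsilon\sqrt{\adimn/\pi}\cdot d/2=d\epsilon\sqrt{\adimn/(4\pi)}$. You correctly identified that the reflection step is the crux — without it the union bound overshoots by exactly a factor of $2$.
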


\begin{lemma}\label{lemma29}
Let $d>0$.  Let $g\colon\R^{\adimn}\to\R$ be a degree $d$ polynomial.  Let $\Omega=\{x\in\R^{\adimn}\colon g(x)\geq0\}$.  Let $X,Y\in\R^{\adimn}$ be independent standard Gaussian random vectors.  Let $0<\eta<1/2$.  Then
$$\P(X\in\Omega,\, X\sqrt{1-\eta^{2}}+\eta Y\in\Omega)=\gamma_{\adimn}(\Omega) -\frac{\eta}{\sqrt{2\pi}}\int_{\partial\Omega}\gamma_{\adimn}(x)dx+o(\eta).$$
Here and below, the implied constant $o(\eta)$ can depend on $g$.
\end{lemma}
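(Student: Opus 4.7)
The plan is to reduce Lemma~\ref{lemma29} to Lemma~\ref{lemma27} via a scaling trick, with Lemma~\ref{lemma28} absorbing the error introduced by the rescaling. The starting point is the algebraic identity
\[
X\sqrt{1-\eta^{2}}+\eta Y \;=\; \sqrt{1-\eta^{2}}\Bigl(X+\tfrac{\eta}{\sqrt{1-\eta^{2}}}Y\Bigr).
\]
Set $\epsilon:=\eta/\sqrt{1-\eta^{2}}$ and $\tilde X:=X\sqrt{1-\eta^{2}}+\eta Y$, so that $X+\epsilon Y=\tilde X/\sqrt{1-\eta^{2}}$. Two Taylor expansions give $\epsilon=\eta+O(\eta^{3})$ (so $o(\epsilon)=o(\eta)$) and $\mu:=1/\sqrt{1-\eta^{2}}-1=O(\eta^{2})$. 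Moreover $\tilde X$ is a centered Gaussian with covariance $\bigl((1-\eta^{2})+\eta^{2}\bigr)I=I$, hence $\tilde X$ is itself standard Gaussian.

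Next I apply Lemma~\ref{lemma27} in its complementary form: writing $\P(X\in\Omega,X+\epsilon Y\in\Omega)=\P(X\in\Omega)-\P(X\in\Omega,X+\epsilon Y\notin\Omega)$ and using $\P(X\in\Omega)=\gamma_{\adimn}(\Omega)$, Lemma~\ref{lemma27} together with $\epsilon=\eta+O(\eta^{3})$ yields
\[
\P\bigl(X\in\Omega,\,X+\epsilon Y\in\Omega\bigr)=\gamma_{\adimn}(\Omega)-\frac{\eta}{\sqrt{2\pi}}\int_{\partial\Omega}\gamma_{\adimn}(x)\,dx+o(\eta).
\]
To replace $X+\epsilon Y=\tilde X(1+\mu)$ by $\tilde X$ inside the second event, I use the pointwise bound
\[
\bigl|\P(X\in\Omega,\tilde X\in\Omega)-\P(X\in\Omega,\tilde X(1+\mu)\in\Omega)\bigr|\;\le\;\P\bigl(f(\tilde X)\ne f(\tilde X(1+\mu))\bigr).
\]
Since $\tilde X$ is standard Gaussian and $\mu=O(\eta^{2})$, Lemma~\ref{lemma28} bounds the right-hand side by $d\mu\sqrt{\adimn/(4\pi)}=O(\eta^{2})=o(\eta)$. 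Combining the last two displays produces the claimed asymptotic identity.

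The main obstacle, such as it is, is spotting the correct rescaling so that Lemma~\ref{lemma28} can be invoked on a \emph{genuinely} standard Gaussian $\tilde X$ rather than on a perturbed Gaussian; once $\tilde X=X\sqrt{1-\eta^{2}}+\eta Y$ is recognized as a dilation of a standard Gaussian by the explicit scalar factor $\sqrt{1-\eta^{2}}$, everything else is Taylor bookkeeping and a triangle-inequality coupling argument.
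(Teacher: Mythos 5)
Your proof is correct and follows essentially the same route as the paper: both rewrite $X\sqrt{1-\eta^{2}}+\eta Y$ as a dilation of $X+\epsilon Y$ with $\epsilon=\eta/\sqrt{1-\eta^{2}}$, invoke Lemma~\ref{lemma27} for the leading term, and use Lemma~\ref{lemma28} with a dilation parameter of size $O(\eta^{2})$ to absorb the error. The only cosmetic difference is the bookkeeping: you control the dilation error with a single symmetric-difference bound $|\P(E_1)-\P(E_2)|\le\P(E_1\triangle E_2)$, whereas the paper reaches the same estimate by splitting on the event $\{f(Z)=f(rZ)\}$ and applying Lemma~\ref{lemma28} twice.
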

\begin{proof}
Let $Z\colonequals X\sqrt{1-\eta^{2}}+\eta Y$, and let $r\colonequals1/\sqrt{1-\eta^{2}}$.  Using the identity $\P(A_{1})=\P(A_{1}\cap A_{2})+\P(A_{1}\cap A_{2}^{c})$ for events $A_{1},A_{2}$,
\begin{equation}\label{four4}
\begin{aligned}
\P(f(X)=1,\, f(Z)=1)&=\P\Big(f(X)=1,\, f(Z)=1,\, f(Z)=f(rZ)\Big)\\
&\qquad+\P\Big(f(X)=1,\, f(Z)=1,\, f(Z)\neq f(rZ)\Big).
\end{aligned}
\end{equation}
We apply Lemma \ref{lemma28} to the second term of \eqref{four4} with $\epsilon\colonequals(1-\eta^{2})^{-1/2}-1$ to get
$$
\P\Big(f(X)=1,\, f(Z)=1,\, f(Z)\neq f(rZ)\Big)
\leq \P\Big(f(Z)\neq f(rZ)\Big)
\leq d\eta^{2}\sqrt{\adimn}.
$$
And the first term of \eqref{four4} is equal to 
\begin{flalign*}
\P\Big(f(X)=1,\, f(rZ)=1,\, f(Z)=f(rZ)\Big)
&=\P(f(X)=1,\, f(rZ)=1\Big)\\
&\qquad-\P\Big(f(X)=1,\, f(rZ)=1,\, f(Z)\neq f(rZ)\Big).
\end{flalign*}
Using Lemma \ref{lemma28} again, the last quantity is at most $d\eta^{2}\sqrt{\adimn}$, while
\begin{flalign*}
\P(f(X)=1,\, f(rZ)=1\Big)
&=\P(f(X)=1) -\P\Big(f(X)=1,\, f(rZ)=0\Big)\\
&=\gamma_{\adimn}(\Omega) -\frac{\eta}{\sqrt{2\pi}}\int_{\partial\Omega}\gamma_{\adimn}(x)dx+o(\eta).
\end{flalign*}
In the last line we used Lemma \ref{lemma27}.  Combining the above estimates gives
\begin{flalign*}
\P(X\in\Omega,\, X\sqrt{1-\eta^{2}}+\eta Y\in\Omega)
&=\P(f(X)=1,\, f(Z)=1)\\
&=\gamma_{\adimn}(\Omega) -\frac{\eta}{\sqrt{2\pi}}\int_{\partial\Omega}\gamma_{\adimn}(x)dx+o(\eta).
\end{flalign*}
\end{proof}

\begin{proof}[Proof of Corollary \ref{cor2}]
Let $\eta\colonequals\sqrt{1-\rho^{2}}$.  From Lemma \ref{lemma29},
$$\P(X\in\Omega,\, \rho X+Y\sqrt{1-\rho^{2}}\in\Omega)\leq \gamma_{\adimn}(\Omega) -\frac{\sqrt{1-\rho^{2}}}{\sqrt{2\pi}}\int_{\partial\Omega}\gamma_{\adimn}(x)dx+o(\sqrt{1-\rho^{2}}).$$
Since $\Omega$ is \pdized, $\Omega^{c}=-\Omega$, so $\gamma_{\adimn}(\Omega)=1/2$.  Applying Theorem \ref{thm2} gives
\begin{flalign*}
&\P(X\in\Omega,\, \rho X+Y\sqrt{1-\rho^{2}}\in\Omega)\\
&\leq\frac{1}{2} -\frac{\sqrt{1-\rho^{2}}}{\sqrt{2\pi}}
\left((1-6\cdot 10^{-9})\int_{\partial B}\gamma_{\adimn}(x)dx+\int_{\partial\Omega}\Big(1-\frac{\vnorm{N(x)}_{1}}{\sqrt{n}}\Big)\gamma_{\adimn}(x)dx\right)
+o(\sqrt{1-\rho^{2}}).
\end{flalign*}
Finally, applying Lemma \ref{lemma29} to $B$ completes the proof.
\end{proof}

By repeating the proof of Lemma \ref{lemma27}, we get the following.  For completeness, we provide a proof with a dimension-dependent implied constant.

\begin{lemma}\label{lemma32}
Let $\Omega\subset\R^{\adimn}$ be a \pdized set.  Assume that $\partial\Omega$ is a $C^{\infty}$ manifold.  Let $X,Y\in\R^{\adimn}$ be independent random variables such that $X$ is uniformly distributed in $[-\frac{1}{2},\frac{1}{2}]^{\adimn}$ and $Y$ is a standard Gaussian random vectors.  Let $\epsilon>0$.  Then
$$\P(f(X)=1,\, f(X+\epsilon Y)=0)=\frac{\epsilon}{\sqrt{2\pi}}\int_{[-\frac{1}{2},\frac{1}{2}]^{\adimn}\cap \partial\Omega}dx+o(\epsilon).$$
\end{lemma}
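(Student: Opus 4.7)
The plan is to adapt the proof of Lemma \ref{lemma27} to the case where $X$ is uniformly distributed on the cube $[-\tfrac{1}{2},\tfrac{1}{2}]^{\adimn}$ instead of a standard Gaussian. The underlying geometric fact is unchanged: the first-order probability of a boundary crossing equals the surface measure of $\partial\Omega$ times the one-dimensional expected outward normal displacement of $\epsilon Y$, which is $\epsilon\,\E[\max(\langle Y,N\rangle,0)] = \epsilon/\sqrt{2\pi}$ since $\vnorm{N}_{2}=1$ and $Y$ is a standard Gaussian vector.

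First I would apply Fubini, conditioning on $Y=y$:
\[
\P(f(X)=1,\,f(X+\epsilon Y)=0) = \int_{\R^{\adimn}} \gamma_{\adimn}(y)\,\bigl|\bigl\{x\in[-\tfrac{1}{2},\tfrac{1}{2}]^{\adimn} : x\in\Omega,\ x+\epsilon y\notin\Omega\bigr\}\bigr|\,dy.
\]
Next I invoke the tubular neighborhood theorem: since $\partial\Omega\cap[-\tfrac{1}{2},\tfrac{1}{2}]^{\adimn}$ is a compact $C^{\infty}$ manifold (with boundary where $\partial\Omega$ meets $\partial[-\tfrac{1}{2},\tfrac{1}{2}]^{\adimn}$), there exists $\delta>0$ such that the map $(s,t)\mapsto s+tN(s)$ is a diffeomorphism onto its image for $|t|<\delta$, with Jacobian $1+O(t)$. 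I then split the $y$-integral at $\vnorm{y}_{2}=\delta/(4\epsilon)$: the tail $\vnorm{y}_{2}>\delta/(4\epsilon)$ contributes at most $\P(\vnorm{Y}_{2}>\delta/(4\epsilon))$, which decays like $e^{-c/\epsilon^{2}}$ and is $o(\epsilon)$.

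In the main regime $\vnorm{y}_{2}\le\delta/(4\epsilon)$, I pass to tubular coordinates. A point $x = s - tN(s)$ with $s\in\partial\Omega\cap[-\tfrac{1}{2},\tfrac{1}{2}]^{\adimn}$ and small $t\ge0$ lies in $\Omega$, and $x+\epsilon y\notin\Omega$ iff $-t + \epsilon\langle y,N(s)\rangle > 0$, up to a curvature error of order $\epsilon^{2}\vnorm{y}_{2}^{2}$. This selects a slab of $t$-thickness $\epsilon\max(\langle y,N(s)\rangle,0)$ per unit surface area, so that
\[
\bigl|\bigl\{x\in[-\tfrac{1}{2},\tfrac{1}{2}]^{\adimn} : x\in\Omega,\ x+\epsilon y\notin\Omega\bigr\}\bigr| = \epsilon\int_{[-\frac{1}{2},\frac{1}{2}]^{\adimn}\cap\partial\Omega} \max(\langle y,N(s)\rangle,0)\,ds + O(\epsilon^{2}\vnorm{y}_{2}^{2}).
\]
The error integrates against $\gamma_{\adimn}$ to $O(\epsilon^{2})=o(\epsilon)$, while the main term integrates to $\frac{\epsilon}{\sqrt{2\pi}}\int_{[-\frac{1}{2},\frac{1}{2}]^{\adimn}\cap\partial\Omega} ds$ using the one-dimensional identity $\E[\max(Z,0)]=1/\sqrt{2\pi}$ for $Z=\langle Y,N(s)\rangle\sim N(0,1)$.

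The main obstacle I expect is the boundary of the cube: the set $\partial\Omega$ can transversally meet $\partial[-\tfrac{1}{2},\tfrac{1}{2}]^{\adimn}$, so the tubular slab may extend outside the cube and cause over-counting. However, the over-counted region sits inside a tube of radius $\epsilon\vnorm{y}_{2}$ around a compact $(n{-}2)$-dimensional submanifold, hence has volume $O(\epsilon^{2}\vnorm{y}_{2}^{2})$; after weighting by $\gamma_{\adimn}(y)$ this contributes $O(\epsilon^{2})=o(\epsilon)$. A secondary technical point is ruling out segments from $x$ to $x+\epsilon y$ that cross $\partial\Omega$ more than once, which by smoothness happens only on an $O(\epsilon^{2}\vnorm{y}_{2}^{2})$-measure set of $x$. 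Both corrections are handled exactly as in the Gaussian proof of Lemma \ref{lemma27}.
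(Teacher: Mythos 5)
Your proof is correct, but it takes a genuinely different route from the paper's. The paper proves Lemma \ref{lemma32} via the heat semigroup: it introduces $U_{s}f(x) = \int f(x + y\sqrt{2s})\,\gamma_{\adimn}(y)\,dy$, uses $\partial_{s}U_{s} = \Delta U_{s}$ and the divergence theorem to convert $\int_{[-\frac12,\frac12]^{n}\cap\Omega}\Delta U_{s}1_{\Omega^{c}}\,dx$ into two boundary integrals (one over $[-\frac12,\frac12]^{n}\cap\partial\Omega$, one over the cube faces), computes the small-$s$ asymptotics $\nabla U_{s}1_{\Omega^{c}}(x) \sim \frac{1}{2\sqrt{\pi s}}N(x)$ on $\partial\Omega$ and $o(s^{-1/2})$ off it, and then integrates $ds$ from $0$ to $\epsilon^{2}/2$ to land on $\frac{\epsilon}{\sqrt{2\pi}}\int_{\partial\Omega}dx$. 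You instead condition on $Y=y$, pass to tubular/Fermi coordinates near $\partial\Omega$, compute the crossing volume as a slab of thickness $\epsilon\max(\langle y,N(s)\rangle,0)$ up to curvature error, and use $\E[\max(Z,0)]=1/\sqrt{2\pi}$; the large-$\|y\|_{2}$ tail is killed by the Gaussian. Both are standard first-variation derivations of the same coefficient and both have to discard the contribution from the cube faces: the paper does it by observing that the gradient asymptotics are $o(s^{-1/2})$ away from $\partial\Omega$, while you do it via the $(n-2)$-dimensional tube-volume bound $O(\epsilon^{2}\|y\|_{2}^{2})$. Your approach is more hands-on geometric and makes the origin of the $1/\sqrt{2\pi}$ transparent; the paper's is more compact and sidesteps the multiple-crossing/Jacobian bookkeeping by pushing everything into the semigroup. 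The one detail worth spelling out in your write-up is the multiple-crossing issue: you should note that for $\|y\|_{2}\le\delta/(4\epsilon)$ the segment $[x,x+\epsilon y]$ has length less than $\delta$, so (taking $\delta$ small enough relative to the reach of $\partial\Omega\cap[-\frac12,\frac12]^{n}$) the segment can cross $\partial\Omega$ at most once, rather than invoking an $O(\epsilon^{2}\|y\|_{2}^{2})$ measure bound for multiple crossings, which is less obviously justified.
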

\begin{proof}
For any $s>0,x\in\R^{\adimn}$, and for any $f\colon\R^{\adimn}\to[0,1]$, let $U_{s} f(x)\colonequals \int_{\R^{\adimn}} f(x+y\sqrt{2s})\gamma_{\adimn}(y)dy$.  Let $\Delta\colonequals\sum_{i=1}^{\adimn}\partial^{2}/\partial x_{i}^{2}$.  It is well known that $(d/ds)U_{s}f(x)=\Delta U_{s}f(x)$ for all $s>0,x\in\R^{\adimn}$.  Using the divergence theorem,
\begin{equation}\label{ten1}
\begin{aligned}
&\frac{d}{ds}\int_{[-\frac{1}{2},\frac{1}{2}]^{\adimn}\cap\Omega}U_{s}1_{\Omega^{c}}(x)dx
=\int_{[-\frac{1}{2},\frac{1}{2}]^{\adimn}\cap\Omega}\Delta U_{s}1_{\Omega^{c}}(x)dx
=\int_{[-\frac{1}{2},\frac{1}{2}]^{\adimn}\cap\Omega}\mathrm{div}(\nabla U_{s}1_{\Omega^{c}}(x))dx\\
&\qquad\qquad=\int_{[-\frac{1}{2},\frac{1}{2}]^{\adimn}\cap \partial\Omega}\langle\nabla U_{s}1_{\Omega^{c}}(x), N(x)\rangle dx
+\int_{\big(\Omega\cap \partial [-\frac{1}{2},\frac{1}{2}]^{\adimn}\big)\setminus\partial\Omega}\langle\nabla U_{s}1_{\Omega^{c}}(x), N(x)\rangle dx.
\end{aligned}
\end{equation}
Changing variables and differentiating,
$$\nabla U_{s}1_{\Omega^{c}}(x)=\frac{1}{\sqrt{2 s}}\int_{\R^{\adimn}}y 1_{\Omega^{c}}(x+y\sqrt{2s})\gamma_{\adimn}(y)dy,\qquad\forall\,x\in\R^{\adimn}.$$
Therefore, $\lim_{s\to0^{+}}2\sqrt{\pi s}\,\nabla U_{s}1_{\Omega^{c}}(x)=N(x)$ for all $x\in\partial\Omega$.  That is,
\begin{equation}\label{ten2}
\nabla U_{s}1_{\Omega^{c}}(x)=\frac{1}{2\sqrt{\pi s}}N(x)+o(s^{-1/2}),\qquad\forall\,x\in\partial\Omega.
\end{equation}
Also, $\lim_{s\to0^{+}}2\sqrt{\pi s}\,\nabla U_{s}1_{\Omega^{c}}(x)=0$ for all $x\notin\partial\Omega$.  So, using $f=1_{\Omega}$,
\begin{flalign*}
&\P(f(X)=1,\, f(X+\epsilon Y)=0)
=\int_{[-\frac{1}{2},\frac{1}{2}]^{\adimn}\cap\Omega}U_{\epsilon^{2}/2}1_{\Omega^{c}}(x)dx\\
&\quad=\int_{s=0}^{s=\epsilon^{2}/2}\frac{d}{ds}\Big(\int_{[-\frac{1}{2},\frac{1}{2}]^{\adimn}\cap\Omega}U_{s}1_{\Omega^{c}}(x)dx\Big)ds\\
&\stackrel{\eqref{ten1}\wedge\eqref{ten2}}{=}
\Big(\int_{[-\frac{1}{2},\frac{1}{2}]^{\adimn}\cap\partial\Omega}dx\Big)\int_{s=0}^{s=\epsilon^{2}/2}\Big(\frac{1}{2\sqrt{\pi s}}+o(s^{-1/2})\Big)ds
=\frac{\epsilon}{\sqrt{2\pi}}\int_{[-\frac{1}{2},\frac{1}{2}]^{\adimn}\cap\partial\Omega}dx+o(\epsilon).
\end{flalign*}

\end{proof}

\begin{proof}[Proof of Corollary \ref{cor4}]
Note that $\P(X\in\Omega)=1/2$ since $-\Omega=\Omega^{c}$.  Using Lemma \ref{lemma32},
\begin{flalign*}
\P(X\in\Omega,\, X+\epsilon Y\in\Omega)
&=\P(X\in\Omega)-\P(X\in\Omega,\, X+\epsilon Y\notin\Omega)\\
&=\frac{1}{2}-\frac{\epsilon}{\sqrt{2\pi}}\int_{[-\frac{1}{2},\frac{1}{2}]^{\adimn}\cap \partial\Omega}dx+o(\epsilon).
\end{flalign*}
So, by Remark \ref{rk6.5} and Lemma \ref{lemma32} applied to $B$,
\begin{flalign*}
\P(X\in\Omega,\, X+\epsilon Y\in\Omega)
&\leq  \frac{1}{2}-\frac{\epsilon}{\sqrt{2\pi}}\Big(\int_{[-\frac{1}{2},\frac{1}{2}]^{\adimn}\cap \partial B}
1+\Big(1-\frac{\vnorm{N(x)}_{1}}{\sqrt{\adimn}}\Big)dx\Big)+o(\epsilon)\\
&=\P(X\in B,\, X+\epsilon Y\in B)
-\int_{[-\frac{1}{2},\frac{1}{2}]^{\adimn}\cap \partial\Omega}\Big(1-\frac{\vnorm{N(x)}_{1}}{\sqrt{\adimn}}\Big)dx+o(\epsilon).
\end{flalign*}
\end{proof}

\medskip
\noindent\textbf{Acknowledgement}.  Thanks to Elchanan Mossel and Joe Neeman for helpful discussions, especially concerning the ``robustness'' term in the main results.

\bibliographystyle{amsalpha}
\bibliography{12162011}

\newcommand{\etalchar}[1]{$^{#1}$}
\def\polhk#1{\setbox0=\hbox{#1}{\ooalign{\hidewidth
  \lower1.5ex\hbox{`}\hidewidth\crcr\unhbox0}}} \def\cprime{$'$}
  \def\cprime{$'$}
\providecommand{\bysame}{\leavevmode\hbox to3em{\hrulefill}\thinspace}
\providecommand{\MR}{\relax\ifhmode\unskip\space\fi MR }
\providecommand{\MRhref}[2]{%
  \href{http://www.ams.org/mathscinet-getitem?mr=#1}{#2}
}
\providecommand{\href}[2]{#2}
\begin{thebibliography}{KKMO07}

\bibitem[ABS10]{arora09}
Sanjeev Arora, Boaz Barak, and David Steurer, \emph{Subexponential algorithms
  for unique games and related problems}, 2010 {IEEE} 51st {A}nnual {S}ymposium
  on {F}oundations of {C}omputer {S}cience {FOCS} 2010, IEEE Computer Soc., Los
  Alamitos, CA, 2010, pp.~563--572. \MR{3025231}

\bibitem[AKKT15]{agarwal15}
Naman Agarwal, Guy Kindler, Alexandra Kolla, and Luca Trevisan, \emph{Unique
  games on the hypercube}, Chicago J. Theor. Comput. Sci. \textbf{2015} (2015).

\bibitem[BBH{\etalchar{+}}12]{barak12}
Boaz Barak, Fernando G. S.~L. Brand{\~{a}}o, Aram~Wettroth Harrow, Jonathan~A.
  Kelner, David Steurer, and Yuan Zhou, \emph{Hypercontractivity,
  sum-of-squares proofs, and their applications}, Proceedings of the 44th
  Symposium on Theory of Computing Conference, {STOC} 2012, New York, NY, USA,
  May 19 - 22, 2012, 2012, pp.~307--326.

\bibitem[BKS19]{barak19a}
Boaz Barak, Pravesh~K. Kothari, and David Steurer, \emph{Small-set expansion in
  shortcode graph and the 2-to-2 conjecture}, 10th Innovations in Theoretical
  Computer Science Conference, {ITCS} 2019, January 10-12, 2019, San Diego,
  California, {USA}, 2019, pp.~9:1--9:12.

\bibitem[Bor75]{borell75}
Christer Borell, \emph{The {B}runn-{M}inkowski inequality in {G}auss space},
  Invent. Math. \textbf{30} (1975), no.~2, 207--216. \MR{0399402 (53 \#3246)}

\bibitem[Bor85]{borell85}
\bysame, \emph{Geometric bounds on the {O}rnstein-{U}hlenbeck velocity
  process}, Z. Wahrsch. Verw. Gebiete \textbf{70} (1985), no.~1, 1--13.
  \MR{795785 (87k:60103)}

\bibitem[BPT13]{bourne13}
DP~Bourne, MA~Peletier, and F~Theil, \emph{Optimality of the triangular lattice
  for a particle system with wasserstein interaction}, To appear in
  Communications in Mathematical Physics (2013).

\bibitem[BS01]{burchard01}
A.~Burchard and M.~Schmuckenschl�ger, \emph{Comparison theorems for exit
  times}, Geometric \& Functional Analysis GAFA \textbf{11} (2001), no.~4,
  651--692 (English).

\bibitem[CS06]{choksi06}
Rustum Choksi and Peter Sternberg, \emph{Periodic phase separation: the
  periodic cahn-hilliard and isoperimetric problems}, Interfaces and Free
  Boundaries \textbf{8} (2006), no.~3, 371--392.

\bibitem[DKK{\etalchar{+}}18]{khot18a}
Irit Dinur, Subhash Khot, Guy Kindler, Dor Minzer, and Muli Safra, \emph{On
  non-optimally expanding sets in grassmann graphs}, Proceedings of the 50th
  Annual {ACM} {SIGACT} Symposium on Theory of Computing, {STOC} 2018, Los
  Angeles, CA, USA, June 25-29, 2018, 2018, pp.~940--951.

\bibitem[DMN17]{de17}
Anindya De, Elchanan Mossel, and Joe Neeman, \emph{Noise stability is
  computable and approximately low-dimensional}, 32nd {C}omputational
  {C}omplexity {C}onference, LIPIcs. Leibniz Int. Proc. Inform., vol.~79,
  Schloss Dagstuhl. Leibniz-Zent. Inform., Wadern, 2017, pp.~Art. No. 10, 11.
  \MR{3691135}

\bibitem[DR19]{daneri17}
Sara Daneri and Eris Runa, \emph{Exact periodic stripes for minimizers of a
  local/nonlocal interaction functional in general dimension}, Archive for
  Rational Mechanics and Analysis \textbf{231} (2019), no.~1, 519--589.

\bibitem[Eld15]{eldan13}
Ronen Eldan, \emph{A two-sided estimate for the gaussian noise stability
  deficit}, Inventiones mathematicae \textbf{201} (2015), no.~2, 561--624
  (English).

\bibitem[GM12]{giuliani12}
Alessandro Giuliani and Stefan M{\"u}ller, \emph{Striped periodic minimizers of
  a two-dimensional model for martensitic phase transitions}, Communications in
  Mathematical Physics \textbf{309} (2012), no.~2, 313--339.

\bibitem[GS16]{giuliani16}
Alessandro Giuliani and Robert Seiringer, \emph{Periodic striped ground states
  in ising models with competing interactions}, Communications in Mathematical
  Physics \textbf{347} (2016), no.~3, 983--1007.

\bibitem[HNW19]{harrow17}
Aram~W. Harrow, Anand Natarajan, and Xiaodi Wu, \emph{Limitations of
  semidefinite programs for separable states and entangled games},
  Communications in Mathematical Physics (2019).

\bibitem[Kan11]{kane11}
Daniel~M. Kane, \emph{The gaussian surface area and noise sensitivity of
  degree-d polynomial threshold functions}, computational complexity
  \textbf{20} (2011), no.~2, 389--412.

\bibitem[Kho02]{khot02}
Subhash Khot, \emph{On the power of unique 2-prover 1-round games}, Proceedings
  of the Thirty-Fourth Annual ACM Symposium on Theory of Computing (New York),
  ACM, 2002, pp.~767--775 (electronic). \MR{MR2121525}

\bibitem[Kho10]{khot10a}
\bysame, \emph{On the unique games conjecture}, 25th {A}nnual {IEEE}
  {C}onference on {C}omputational {C}omplexity---{CCC} 2010, IEEE Computer
  Soc., Los Alamitos, CA, 2010, pp.~99--121. \MR{2932348}

\bibitem[KKMO07]{khot07}
Subhash Khot, Guy Kindler, Elchanan Mossel, and Ryan O'Donnell, \emph{Optimal
  inapproximability results for {MAX}-{CUT} and other 2-variable {CSP}s?}, SIAM
  J. Comput. \textbf{37} (2007), no.~1, 319--357. \MR{2306295 (2008d:68035)}

\bibitem[KM16]{khot15}
Subhash Khot and Dana Moshkovitz, \emph{Candidate hard unique game},
  Proceedings of the Forty-eighth Annual ACM Symposium on Theory of Computing,
  STOC '16, ACM, 2016.

\bibitem[KMMS18]{khot18c}
Subhash Khot, Dor Minzer, Dana Moshkovitz, and Muli Safra, \emph{Small set
  expansion in the johnson graph}, Electronic Colloquium on Computational
  Complexity {(ECCC)} \textbf{25} (2018), 78.

\bibitem[KMS17]{khot18b}
Subhash Khot, Dor Minzer, and Muli Safra, \emph{On independent sets, 2-to-2
  games, and grassmann graphs}, Proceedings of the 49th Annual {ACM} {SIGACT}
  Symposium on Theory of Computing, {STOC} 2017, Montreal, QC, Canada, June
  19-23, 2017, 2017, pp.~576--589.

\bibitem[KMS18]{khot18}
\bysame, \emph{Pseudorandom sets in grassmann graph have near-perfect
  expansion}, Electronic Colloquium on Computational Complexity {(ECCC)}
  \textbf{25} (2018), 6.

\bibitem[Led96]{ledoux96}
Michel Ledoux, \emph{Isoperimetry and {G}aussian analysis}, Lectures on
  probability theory and statistics ({S}aint-{F}lour, 1994), Lecture Notes in
  Math., vol. 1648, Springer, Berlin, 1996, pp.~165--294. \MR{1600888
  (99h:60002)}

\bibitem[MN15]{mossel12}
Elchanan Mossel and Joe Neeman, \emph{Robust optimality of {G}aussian noise
  stability}, J. Eur. Math. Soc. (JEMS) \textbf{17} (2015), no.~2, 433--482.
  \MR{3317748}

\bibitem[MOO10]{mossel10}
Elchanan Mossel, Ryan O'Donnell, and Krzysztof Oleszkiewicz, \emph{Noise
  stability of functions with low influences: invariance and optimality}, Ann.
  of Math. (2) \textbf{171} (2010), no.~1, 295--341. \MR{2630040 (2012a:60091)}

\bibitem[Ros01]{ros01}
Antonio Ros, \emph{The isoperimetric problem}, Proceedings from the Clay
  Mathematics Institute Summer School, MSRI, Berkeley, CA, 2001, Available at
  \href{http://www.ugr.es/~aros/isoper.pdf}{http://www.ugr.es/$\sim$aros/isoper.pdf}.

\bibitem[RST12]{rag12}
Prasad Raghavendra, David Steurer, and Madhur Tulsiani, \emph{Reductions
  between expansion problems}, Proceedings of the 2012 IEEE Conference on
  Computational Complexity (CCC) (Washington, DC, USA), CCC '12, IEEE Computer
  Society, 2012, pp.~64--73.

\bibitem[SC74]{sudakov74}
V.~N. Sudakov and B.~S. Cirel{\cprime}son, \emph{Extremal properties of
  half-spaces for spherically invariant measures}, Zap. Nau\v cn. Sem.
  Leningrad. Otdel. Mat. Inst. Steklov. (LOMI) \textbf{41} (1974), 14--24, 165,
  Problems in the theory of probability distributions, II. \MR{0365680 (51
  \#1932)}

\bibitem[SS03]{stein03a}
Elias~M. Stein and Rami Shakarchi, \emph{Fourier analysis}, Princeton Lectures
  in Analysis, vol.~1, Princeton University Press, Princeton, NJ, 2003, An
  introduction. \MR{1970295}

\bibitem[Ste70]{stein70}
Elias~M. Stein, \emph{Singular integrals and differentiability properties of
  functions}, Princeton Mathematical Series, No. 30, Princeton University
  Press, Princeton, N.J., 1970. \MR{0290095 (44 \#7280)}

\bibitem[The06]{theil06}
Florian Theil, \emph{A proof of crystallization in two dimensions},
  Communications in Mathematical Physics \textbf{262} (2006), no.~1, 209--236.

\end{thebibliography}

\end{document}